\newtheorem{theorem}{Theorem}[section]
\newtheorem{corollary}[theorem]{Corollary}
\newtheorem{lemma}[theorem]{Lemma}
\newtheorem{proposition}[theorem]{Proposition}
\theoremstyle{definition}
\newtheorem{definition}[theorem]{Definition}
\theoremstyle{remark}
\newtheorem{example}{Example}
\title{The Varchenko Matrix for Dehyperplane Arrangement}
\author{Hery Randriamaro
\thanks{This research was funded by my mother \\
Lot II B 32 bis Faravohitra, 101 Antananarivo, Madagascar \\
e-mail: \texttt{hery.randriamaro@outlook.com}}}
\begin{document}

\maketitle

\begin{abstract}
\noindent This article computes the Varchenko determinant of dehyperplane arrangements which are generalizations of pseudohyperplane arrangements. But unlike those latter, they are defined on a real manifold, and it is not always possible to obtain a central dehyperplane arrangement by coning. This article also studies the solution space of a linear system defined from a dehyperplane arrangement. That equation system was first introduced by Aguiar and Mahajan for central hyperplane arrangements.  

\bigskip 

\noindent \textsl{Keywords}: Dehyperplane Arrangement, Varchenko Determinant, Linear System 

\smallskip

\noindent \textsl{MSC Number}: 05B20, 06A11, 15A15, 52C35
\end{abstract}

\section{Introduction}

\noindent For $n \in \mathbb{N}$, let $\mathscr{R}_n$ be the category of nonempty real open contractible $n$-dimensional manifolds. A dehyperplane in $T \in \mathscr{R}_n$ is a set $H \subseteq T$ such that $H \in \mathscr{R}_{n-1}$ and $H$ cuts $T$ into two elements of $\mathscr{R}_n$. For a dehyperplane $H$ in $T$, denote by $H^+, H^-$ both elements of $\mathscr{R}_n$ such that $H^+ \sqcup H \sqcup H^- = T$ and $\overline{H^+} \cap \overline{H^-} = H$. Consider a finite set $\mathcal{A}$ of dehyperplanes in $T$. A flat of $\mathcal{A}$ is a nonempty intersection of dehyperplanes in $\mathcal{A}$. Denote by $L_{\mathcal{A}}$ the set formed by the flats of $\mathcal{A}$, the intersection of element in $\emptyset$ being $T$. It is a meet semilattice with partial order $\leq$ defined, for $X,Y \in L_{\mathcal{A}}$, by $X \leq Y \Longleftrightarrow Y \subseteq X$. The set $\mathcal{A}$ is a dehyperplane arrangement in $T$ if, for every $H \in \mathcal{A}$ and $X \in L_{\mathcal{A}}$ such that $H \cap X \neq \emptyset$, we have
\begin{itemize}
	\item either $H \leq X$,
	\item or $\exists i \in [n]:\, H \cap X \in \mathscr{R}_{i-1},\ H^+ \cap X \in \mathscr{R}_i,\ H^- \cap X \in \mathscr{R}_i$.
\end{itemize}

\noindent Letting $H^0 := H$ for a dehyperplane $H$, a face of $\mathcal{A}$ is a nonempty subset $F \subseteq T$ having the form $\displaystyle F= \bigcap_{H \in \mathcal{A}} H^{\epsilon_H(F)}$ with $\epsilon_H(F) \in \{+,0,-\}.$ Denote $F_{\mathcal{A}}$ the set formed by the faces of $\mathcal{A}$. It is a poset with partial order $\preceq$ defined, for $F, G \in F_{\mathcal{A}}$, by $$F \preceq G \quad \Longleftrightarrow \quad \forall H \in \mathcal{A}:\ \epsilon_H(F) \in \big\{0, \epsilon_H(G)\big\}.$$

\noindent The sign sequence of a face $F \in F_{\mathcal{A}}$ is	$\epsilon_{\mathcal{A}}(F) := \big(\epsilon_H(F)\big)_{H \in \mathcal{A}}$. A chamber of $\mathcal{A}$ is a face whose sign sequence contains no $0$. Denote the set formed by the chambers of $\mathcal{A}$ by $C_{\mathcal{A}}$.

\smallskip

\noindent The dehyperplane arrangements were recently introduced, and their $f$-polynomial computed \cite{Ra3}. As mentioned at the end of that article, we think that it is possible to provide a generalization of the topological representation theorem by using dehyperplane arrangements. In other words, we believe that every conditional oriented matroid is poset isomorphic to $F_{\mathcal{A}}$ for some dehyperplane arrangement $F_{\mathcal{A}}$. The former was introduced by Bandelt et al. \cite{BaChKn}, and is a common generalization of oriented matroids and lopsided sets.

\begin{definition}
Let $\mathcal{K}$ be a subset of a dehyperplane arrangement $\mathcal{A}$ in $T \in \mathscr{R}_n$. An \textbf{apartment} of $\mathcal{A}$ is a chamber of $\mathcal{K}$. Denote the set formed by the apartments of $\mathcal{A}$ by $K_{\mathcal{A}}$.
\end{definition}

\noindent The sets formed by the faces and the chambers in an apartment $K \in K_{\mathcal{A}}$ are respectively $$F_{\mathcal{A}}^K := \{F \in F_{\mathcal{A}}\ |\ F \subseteq K\} \quad \text{and} \quad C_{\mathcal{A}}^K := C_{\mathcal{A}} \cap F_{\mathcal{A}}^K.$$

\noindent For $H \in \mathcal{A}$ and $\varepsilon \in \{+,-\}$, assign a variable $q_H^{\varepsilon}$ to every open spaces $H^{\varepsilon}$. We work with the polynomial ring $R_{\mathcal{A}} := \mathbb{Z}\big[q_H^{\varepsilon}\ \big|\ \varepsilon \in \{+,-\},\, H \in \mathcal{A}\big]$. For $C,D \in C_{\mathcal{A}}$, the set of formed by the open spaces containing $C$ but not $D$ is $\mathscr{H}(C,D) := \big\{H^{\epsilon_H(C)}\ \big|\ H \in \mathcal{A},\, \epsilon_H(C) = - \epsilon_H(D)\big\}$. Define an extension $\mathrm{v}: C_{\mathcal{A}} \times C_{\mathcal{A}} \rightarrow R_{\mathcal{A}}$ to dehyperplane arrangements of the distance function of Aguiar and Mahajan \cite[§~8.1]{AgMa} by
$$\mathrm{v}(C,C) = 1 \quad \text{and} \quad \mathrm{v}(C,D) = \prod_{H^{\varepsilon} \in \mathscr{H}(C,D)} q_H^{\varepsilon}\,\ \text{if}\,\ C \neq D.$$

\begin{definition}
The \textbf{Varchenko matrix} for an apartment $K$ of a dehyperplane arrangement $\mathcal{A}$ in $T \in \mathscr{R}_n$ is $V_{\mathcal{A}}^K :=\big|\mathrm{v}(D,C)\big|_{C,D \in C_{\mathcal{A}}^K}$.
\end{definition}

\noindent For a dehyperplane arrangement $\mathcal{A}$ in $T$, we just write $V_{\mathcal{A}}$ for $V_{\mathcal{A}}^T$. That matrix was originally defined for hyperplane arrangements in $\mathbb{R}^n$ and with the restriction $q_H^+ = q_H^-$ by Varchenko \cite[§ 1]{Va}. But it already appeared earlier in the implicit form of a symmetric matrix for a Verma module over a $\mathbb{C}$-algebra \cite[§ 1]{ScVa}. It plays a key role to prove the realizability of variant models of quon algebras like the multiparametric quon algebra \cite[Proposition~2.1]{Ra2} in quantum statistics. Moreover, algebraic structures of the Varchenko matrix have been studied over time. Gao and Zhang computed its diagonal form \cite[Theorem~2]{GaZh} for hyperplane arrangements in semigeneral position with the same restriction. Then, for $q_H^+ = q_H^- = q$, Denham and Hanlon studied its Smith normal form \cite[Theorem~3.3]{DeHa}, and Hanlon and Stanley computed the nullspace of the Varchenko matrix of braid arrangements \cite[Theorem~3.3]{HaSt}.

\begin{definition}
The \textbf{centralization} of a dehyperplane arrangement $\mathcal{A}$ in $T \in \mathscr{R}_n$ to a face $F \in F_{\mathcal{A}} \setminus C_{\mathcal{A}}$ is the dehyperplane arrangement $\mathcal{A}_F := \{H \in \mathcal{A}\ |\ F \subseteq H\}$ in $T$. The \textbf{weight} and \textbf{multiplicity} of $F$ are respectively the monomial and integer
$$\mathrm{b}_F := \prod_{H \in \mathcal{A}_F} q_H^+ q_H^- \quad \text{and} \quad \beta_F := \frac{\#\{C \in C_{\mathcal{A}}\ |\ \overline{C} \cap H = F\}}{2},$$
where $H \in \mathcal{A}_F$, and we will see at the end of Section~\ref{SeDet} that $\beta_F$ is independent of $H$.
\end{definition}

\noindent We can now state the first main result of this article.

\begin{theorem} \label{ThDet}
Let $\mathcal{A}$ be a dehyperplane arrangement in $T \in \mathscr{R}_n$, and $K \in K_{\mathcal{A}}$. Then,  $$\det V_{\mathcal{A}}^K = \prod_{F \in F_{\mathcal{A}}^K \setminus C_{\mathcal{A}}^K} (1 - \mathrm{b}_F)^{\beta_F}.$$
\end{theorem}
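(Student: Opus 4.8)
The plan is to carry over the classical deletion--restriction proof of Varchenko's determinant formula, after two preliminary moves: reduce to the case where the apartment is all of $T$, then induct on $\#\mathcal{A}$.

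For the reduction, present $K$ as a chamber of a subarrangement $\mathcal{K}\subseteq\mathcal{A}$ and put $\mathcal{B}:=\{H\cap K\mid H\in\mathcal{A}\setminus\mathcal{K},\ H\cap K\neq\emptyset\}$, again a dehyperplane arrangement, now living in $K\in\mathscr{R}_n$. Any two chambers $C,D\in C_{\mathcal{A}}^K$ agree in sign on every $H\in\mathcal{K}$ and on every $H\in\mathcal{A}\setminus\mathcal{K}$ missing $K$, so $\mathscr{H}(C,D)$ only involves the dehyperplanes meeting $K$; hence $C\mapsto C$, $H\mapsto H\cap K$ identifies $V_{\mathcal{A}}^K$ with $V_{\mathcal{B}}$ under $q^{\varepsilon}_{H\cap K}=q^{\varepsilon}_{H}$. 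The same dictionary gives a bijection $F_{\mathcal{A}}^K\setminus C_{\mathcal{A}}^K\leftrightarrow F_{\mathcal{B}}\setminus C_{\mathcal{B}}$ that preserves weights and multiplicities: a face $F\subseteq K$ has $\mathcal{A}_F\subseteq\mathcal{A}\setminus\mathcal{K}$ and each chamber of $\mathcal{A}$ adjacent to $F$ still lies in $K$, so $\mathrm{b}_F$ and $\beta_F$ are already visible inside $\mathcal{B}$. Thus it is enough to prove the formula for $V_{\mathcal{A}}=V_{\mathcal{A}}^{T}$.

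The base case $\mathcal{A}=\emptyset$ is trivial. For the inductive step pick $H_0\in\mathcal{A}$, put $\mathcal{A}':=\mathcal{A}\setminus\{H_0\}$, and let $\pi\colon C_{\mathcal{A}}\to C_{\mathcal{A}'}$ send a chamber to the chamber of $\mathcal{A}'$ containing it. The engine is the multiplicativity of the distance function,
$$\mathrm{v}(D,C)=\mathrm{v}_{\mathcal{A}'}\big(\pi(D),\pi(C)\big)\cdot\begin{cases}q_{H_0}^{\epsilon_{H_0}(D)}&\text{if }\epsilon_{H_0}(D)=-\epsilon_{H_0}(C),\\[1mm] 1&\text{otherwise,}\end{cases}$$
which holds because $C$ and $\pi(C)$ carry the same sign on every $H\in\mathcal{A}'$, so the only new separator is $H_0$. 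Let $P$ be the chambers of $\mathcal{A}'$ that $H_0$ cuts --- each $C'\in P$ splitting into $C'^{+}=C'\cap H_0^{+}$ and $C'^{-}=C'\cap H_0^{-}$ --- and $Q^{+}\sqcup Q^{-}$ the chambers entirely on one side. The displayed identity writes $V_{\mathcal{A}}$ blockwise through submatrices of $V_{\mathcal{A}'}$ and the scalars $q_{H_0}^{\pm}$, and performing, for each $C'\in P$, the determinant-preserving operations $\mathrm{row}(C'^{-})\mapsto\mathrm{row}(C'^{-})-q_{H_0}^{+}\,\mathrm{row}(C'^{+})$ and $\mathrm{col}(C'^{-})\mapsto\mathrm{col}(C'^{-})-q_{H_0}^{-}\,\mathrm{col}(C'^{+})$ replaces the $C'^{-}$ row and column by $(1-q_{H_0}^{+}q_{H_0}^{-})$ times the $C'$ row and column of $V_{\mathcal{A}'}$, with zeros against all ``$+$''-indices --- already for $\mathcal{A}'=\emptyset$ this turns $\begin{pmatrix}1&q_{H_0}^{-}\\ q_{H_0}^{+}&1\end{pmatrix}$ into a diagonal matrix with the entry $1-q_{H_0}^{+}q_{H_0}^{-}$. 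Expanding the resulting matrix by a Schur complement (in which a spurious factor coming from the ``$+$ or uncut'' block cancels the denominator the complement introduces) collapses the determinant to
$$\det V_{\mathcal{A}}\;=\;\det V_{\mathcal{A}'}\cdot\!\!\prod_{\substack{F\in F_{\mathcal{A}}\setminus C_{\mathcal{A}}\\ F\subseteq H_0}}\!\!(1-\mathrm{b}_F)^{\beta_F}\cdot\!\!\prod_{\substack{F'\in F_{\mathcal{A}'}\setminus C_{\mathcal{A}'}\\ F'\subseteq H_0}}\!\!(1-\mathrm{b}_{F'})^{-\beta_{F'}},$$
the last two products recording the faces \emph{created} on $H_0$ (the sets $F'\cap H_0$ for $F'$ a face of $\mathcal{A}'$ cut by $H_0$) and the faces \emph{reweighted} by acquiring the sign $0$ on $H_0$. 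As every face $F$ of $\mathcal{A}$ not contained in $H_0$ has the same centralization, weight and multiplicity as its image in $\mathcal{A}'$ (a cut face contributing two copies, an uncut one a single copy), the inductive hypothesis for $\mathcal{A}'$ then converts the right-hand side into $\prod_{F\in F_{\mathcal{A}}\setminus C_{\mathcal{A}}}(1-\mathrm{b}_F)^{\beta_F}$, closing the induction.

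The hard part is this last bookkeeping: verifying that the factor produced by the linear algebra really equals the face/weight/multiplicity product, i.e.\ that $\beta_{F'\cap H_0}$ and the change of $\beta$ for the pre-existing faces on $H_0$ are exactly what the Schur complement predicts. I would control it by proving in tandem that, for each $H\in\mathcal{A}_F$, the integer $2\beta_F$ counts the chambers of $\mathcal{A}$ adjacent to $F$ across $H$ and that this count is determined by the centralization $\mathcal{A}_F$ alone; this is at once the reason $\beta_F$ is independent of the chosen $H$ (the claim advertised at the end of Section~\ref{SeDet}) and the combinatorial input the induction needs. A minor irritant is that, because $q_H^{+}\neq q_H^{-}$ and the uncut chambers $Q^{\pm}$ have no partner across $H_0$, the matrix obtained after the row and column operations is only block-triangular up to the $Q^{-}$ rows and columns, so the Schur-complement step must be run explicitly rather than read off a $2\times 2$ block pattern.
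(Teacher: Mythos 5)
Your strategy---deletion of a single dehyperplane $H_0$, row/column elimination, and a Schur complement---is a genuinely different route from the paper's (which never deletes anything: it shows via the Witt identities and the ``local boundering'' device that every entry of $(V_{\mathcal{A}}^K)^{-1}$ lies in $B_{\mathcal{A}}^K$, deduces that $\det V_{\mathcal{A}}^K=\prod(1-\mathrm{b}_F)^{l_F}$ for some integers $l_F$, and then pins down $l_F=\beta_F$ by specializing variables to $0$ and comparing leading monomials). Unfortunately your key displayed recursion is false, and it fails already for two crossing lines in $\mathbb{R}^2$. There $\det V_{\mathcal{A}}=(1-q_1^+q_1^-)^2(1-q_2^+q_2^-)^2$ and $\det V_{\mathcal{A}'}=1-q_1^+q_1^-$ for $\mathcal{A}'=\{H_1\}$, so the true ratio is $(1-q_1^+q_1^-)(1-q_2^+q_2^-)^2$; but your two correction products range only over faces \emph{contained in} $H_0=H_2$ (the two rays of $H_2$, each with $\beta=1$, and the origin, with $\beta=0$; no face of $\mathcal{A}'$ lies in $H_2$), giving only $(1-q_2^+q_2^-)^2$. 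The missing factor $1-q_1^+q_1^-$ comes from exactly the case your parenthetical ``a cut face contributing two copies'' sweeps under the rug: when $H_0$ cuts a non-chamber face $F'$ of $\mathcal{A}'$ into two faces of $\mathcal{A}$, each half keeps the full multiplicity $\beta_{F'}$, so the target product acquires exponent $2\beta_{F'}$ while the inductive hypothesis supplies only $\beta_{F'}$. The recursion is correct only when $H_0$ meets no existing non-chamber face (e.g.\ two parallel lines), which is why your local check of the row/column operations looks consistent.

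This is not a cosmetic bookkeeping slip: the extra factors must be produced by the Schur complement of the block indexed by the uncut chambers and the ``$-$''-halves, and identifying that determinant with $\prod(1-\mathrm{b}_F)^{\beta_F}$ over the newly created and newly doubled faces is essentially the full content of the theorem---this is precisely the step you defer with ``I would control it by proving in tandem that\ldots''. None of the existing proofs (Varchenko, Denham--Hanlon, Aguiar--Mahajan, or this paper) proceed by deletion--restriction for exactly this reason. Separately, your preliminary reduction to $K=T$ quietly assumes that $\{H\cap K\mid H\in\mathcal{A}\setminus\mathcal{K},\,H\cap K\neq\emptyset\}$ is again a dehyperplane arrangement in $K$; the restriction lemma of~\cite{Ra3} is stated for flats, not for chambers of a subarrangement, so this needs an argument (the paper avoids it by working with $V_{\mathcal{A}}^K$ directly and setting $q_H^{\pm}=0$ on the walls of $K$). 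To salvage your approach you would need to (i) state and prove the correct recursion, with the exponent on each $(1-\mathrm{b}_{F'})$ for a cut face $F'$ accounted for, and (ii) actually compute the Schur complement's determinant in the general (non-generic) position of $H_0$; as it stands the proof has a genuine gap at its central step.
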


\noindent It is the Varchenko determinant of the dehyperplane arrangement $\mathcal{A}$ for the apartment $K$.

\begin{corollary} \label{CoTh}
For a dehyperplane arrangement $\mathcal{A}$ in $T \in \mathscr{R}_n$, we have  $$\det V_{\mathcal{A}} = \prod_{F \in F_{\mathcal{A}} \setminus C_{\mathcal{A}}} (1 - \mathrm{b}_F)^{\beta_F}.$$
\end{corollary}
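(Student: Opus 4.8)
The plan is to prove Theorem~\ref{ThDet} by induction on the number of dehyperplanes $\#\mathcal{A}$, following the classical strategy that Varchenko used for hyperplane arrangements and that Aguiar and Mahajan adapted for central arrangements. The base case $\mathcal{A} = \emptyset$ is trivial since then $C_{\mathcal{A}}^K = \{T\}$ and $V_{\mathcal{A}}^K = (1)$, while the right-hand product is empty and equals $1$. For the inductive step, I would single out one dehyperplane $H_0 \in \mathcal{A}$ and compare $\mathcal{A}$ with $\mathcal{A}' := \mathcal{A} \setminus \{H_0\}$. The chambers of $\mathcal{A}$ in the apartment $K$ come in two kinds: those lying strictly inside $H_0^+$ or $H_0^-$, which are in bijection with chambers of $\mathcal{A}'$ not cut by $H_0$, and the pairs $(C^+, C^-)$ of chambers of $\mathcal{A}$ that together with a wall on $H_0$ subdivide a single chamber of $\mathcal{A}'$. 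The goal is to perform row and column operations on $V_{\mathcal{A}}^K$ that, on the one hand, reproduce a copy of $V_{\mathcal{A}'}^K$ and, on the other hand, contribute the extra factor $(1 - \mathrm{b}_{F})^{\beta_F}$ for each face $F \in F_{\mathcal{A}}^K \setminus C_{\mathcal{A}}^K$ with $H_0 \in \mathcal{A}_F$, i.e.\ for each new flat contributed by $H_0$.

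Concretely, I would order the chambers of $C_{\mathcal{A}}^K$ so that the paired chambers $C^+_i$ (in $H_0^+$) and $C^-_i$ (in $H_0^-$) are adjacent, and unpaired chambers come afterwards. Using the multiplicativity of $\mathrm{v}$ along galleries — the key combinatorial input being that for a chamber $D$ on the $H_0^+$ side, $\mathrm{v}(D, C^-_i) = q_{H_0}^{\epsilon_{H_0}(D)}\,\mathrm{v}(D, C^+_i)$ when $C^+_i, C^-_i$ are separated only by $H_0$ — one subtracts $q_{H_0}^{+}$ or $q_{H_0}^{-}$ times one column (or row) from the other to clear entries. After these operations the matrix becomes block upper-triangular: one diagonal block is (up to relabeling) $V_{\mathcal{A}'}^{K}$, and the complementary block is indexed by the chambers adjacent to $H_0$ and has a controlled structure. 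The determinant of that second block should factor, by a secondary induction or by a direct symmetry argument identifying it with a product of $2\times 2$-type contributions, into $\prod (1 - q_{H_0}^+ q_{H_0}^- \,\mathrm{b}'_F)^{\beta_F}$ over the relevant faces $F$, where $\mathrm{b}'_F$ is the weight computed in $\mathcal{A}_F \setminus \{H_0\}$; combining $q_{H_0}^+ q_{H_0}^-$ with $\mathrm{b}'_F$ gives exactly $\mathrm{b}_F$.

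The main subtlety — and the place where the dehyperplane setting genuinely differs from the linear one — is that the ambient object $T$ is only a contractible manifold, so one cannot cone to a central arrangement, and the "restriction to $H_0$" is itself a dehyperplane arrangement $\{H \cap H_0 : H \in \mathcal{A}',\ H \cap H_0 \neq \emptyset\}$ in $H_0 \in \mathscr{R}_{n-1}$ only thanks to the defining axiom of a dehyperplane arrangement (the dichotomy $H \leq X$ or the transversality condition). I would need to check that this restricted arrangement inherits the arrangement axioms, that $K \cap H_0$ (when nonempty) is an apartment of it, that the faces of $\mathcal{A}$ lying on $H_0$ correspond bijectively and order-isomorphically to faces of the restriction, and that weights and multiplicities transport correctly, in particular that $\beta_F$ is well-defined and matches on both sides. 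That bookkeeping — rather than any single linear-algebra identity — is where I expect the real work to lie. The combinatorial engine (gallery distances, adjacency of chambers across a single wall, the $(1-\mathrm{b}_F)$ factor emerging from a $2\times 2$ determinant $\left|\begin{smallmatrix} 1 & q^+\\ q^- & 1\end{smallmatrix}\right|$ pattern) is robust, but one must verify it survives the weakening from hyperplanes in $\mathbb{R}^n$ to dehyperplanes in $T$, using the $f$-polynomial machinery and face structure from \cite{Ra3}. Once Theorem~\ref{ThDet} is established, Corollary~\ref{CoTh} is immediate: take $K = T$, which is the chamber of the empty subarrangement $\mathcal{K} = \emptyset$, so $F_{\mathcal{A}}^T = F_{\mathcal{A}}$ and $C_{\mathcal{A}}^T = C_{\mathcal{A}}$, and the formula of the theorem specializes verbatim to the claimed product over all non-chamber faces of $\mathcal{A}$.
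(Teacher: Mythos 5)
The only step of your proposal that bears directly on Corollary~\ref{CoTh} is the last one, and it is correct: the corollary is the case $K=T$ of Theorem~\ref{ThDet}, since $T$ is the chamber of the empty subarrangement $\mathcal{K}=\emptyset$, so $F_{\mathcal{A}}^T=F_{\mathcal{A}}$ and $C_{\mathcal{A}}^T=C_{\mathcal{A}}$; this is exactly how the paper obtains it. The problem is that you then supply your own plan for Theorem~\ref{ThDet}, by deleting a dehyperplane $H_0$ and row/column reduction, and that plan has a genuine gap at its central step. The relation you invoke, $\mathrm{v}(D,C_i^-)=q_{H_0}^{\epsilon_{H_0}(D)}\,\mathrm{v}(D,C_i^+)$, only holds for chambers $D$ on the $H_0^+$ side; for $D\subseteq H_0^-$ it runs the other way, namely $\mathrm{v}(D,C_i^+)=q_{H_0}^{-}\,\mathrm{v}(D,C_i^-)$. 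An elementary column (or row) operation uses a single multiplier for the whole column, so subtracting $q_{H_0}^{+}$ times the column of $C_i^+$ from that of $C_i^-$ clears the entries indexed by chambers in $H_0^{+}$ but leaves entries proportional to $\mathrm{v}(D,C_i^-)(1-q_{H_0}^+q_{H_0}^-)$ in the remaining positions. No block upper-triangular form with $V_{\mathcal{A}'}^K$ as a diagonal block is obtained, and this side-dependence is precisely why none of the existing proofs of Varchenko-type formulas proceed by naive deletion.

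There are two further gaps. First, the formula does not satisfy the deletion recursion your plan presupposes: the exponents $\beta_F$ for faces $F$ not contained in $H_0$ are computed from the chambers of $\mathcal{A}$, not of $\mathcal{A}'=\mathcal{A}\setminus\{H_0\}$, and they can change when $H_0$ subdivides chambers counted in $\beta_F$; your sketch never reconciles the two sides of the claimed identity $\det V_{\mathcal{A}}^K=\det V_{\mathcal{A}'}^K\cdot P$. Second, the factorization of the ``second block'' into $(1-\mathrm{b}_F)^{\beta_F}$ contributions is deferred to ``a secondary induction or a direct symmetry argument,'' but that factorization is essentially the full content of the theorem rather than a routine verification. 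For comparison, the paper's route is entirely different: it inverts $\gamma_{\mathcal{A}}^K$ over the localization $B_{\mathcal{A}}^K$ using the Witt identities (Propositions~\ref{Witt} and~\ref{PrD}, Theorem~\ref{ThDK*}), deduces that $\det V_{\mathcal{A}}^K=\prod(1-\mathrm{b}_F)^{l_F}$ for some nonnegative integers $l_F$ (Proposition~\ref{PrDet}), and then identifies $l_F=\beta_F$ by specializing variables and comparing leading monomials. If you want an inductive argument, that leading-monomial identification, not row reduction, is the step to imitate.
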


\noindent The Varchenko determinant has known several investigations over time. The original computing was that of Varchenko for hyperplane arrangements with the restriction $q_H^+ = q_H^-$ \cite[Theorem~1.1]{Va}. Still with that restriction, Gente computed that determinant for cone of hyperplane arrangements \cite[Theorem~4.5]{Ge}. Then, Aguiar and Mahajan computed that determinant for central hyperplane arrangements \cite[Theorem~8.11]{AgMa} and their cones \cite[Theorem~8.12]{AgMa} using the distance function $\mathrm{v}$. Two recent results are the Varchenko determinant for oriented matroids with $q_H^+ = q_H^-$ computed by Hochstättler and Welker \cite[Theorem~1]{HoWe}, and that for pseudohyperplane arrangements we computed \cite[Theorem~1.5]{Ra4}. The topological representation theorem links both results as it states that every oriented matroid is poset isomorphic to $F_{\mathcal{A}}$ for some central pseudohyperplane arrangement $\mathcal{A}$ \cite[Theorem~3.5, Corollary~3.13]{De}. Besides, as mentioned at the end of their article, the referee suggested Hochstättler and Welker conditional oriented matroids as possible direction to generalize \cite[Theorem~1]{HoWe}. If a generalized topological representation theorem linking conditional oriented matroids with dehyperplane arrangements is proved, then Corollary~\ref{CoTh} would be the generalization proposed by that referee. Furthermore, all those cited Varchenko determinants have a common point: it suffices to investigate central hyperplane or pseudohyperplane arrangements to obtain the Varchenko determinant of arbitrary ones by using the coning described in \cite[§~4]{Ra4}. That is not the case for dehyperplane arrangements. The following example shows one for which the Varchenko determinant cannot be computed by coning.

\begin{example}
Consider the dehyperplane arrangement $\mathcal{A}_{ex} = \{P_1, P_2, P_3, P_4\}$ in $\mathbb{R}^2$ represented in Figure \ref{Ex1}. Assigning the variable $q_i^+$ resp. $q_i^-$ to the set $P_i^+$ resp. $P_i^-$ where $i \in [4]$, we get
$$\displaystyle \det \begin{pmatrix}
1 & q_1^+ q_2^- & q_1^+ q_3^- & q_1^+ q_4^- & q_1^+ \\
q_1^- q_2^+ & 1 & q_2^+ q_3^- & q_2^+ q_4^- & q_2^+ \\
q_1^- q_3^+ & q_2^- q_3^+ & 1 & q_3^+ q_4^- & q_3^+ \\
q_1^- q_4^+ & q_2^- q_4^+ & q_3^- q_4^+ & 1 & q_4^+ \\
q_1^- & q_2^- & q_3^- & q_4^- & 1 
\end{pmatrix} = \prod_{i \in [4]} (1 - q_i^+ q_i^-).$$
	
\begin{figure}[h]
	\centering
	\includegraphics[scale=0.73]{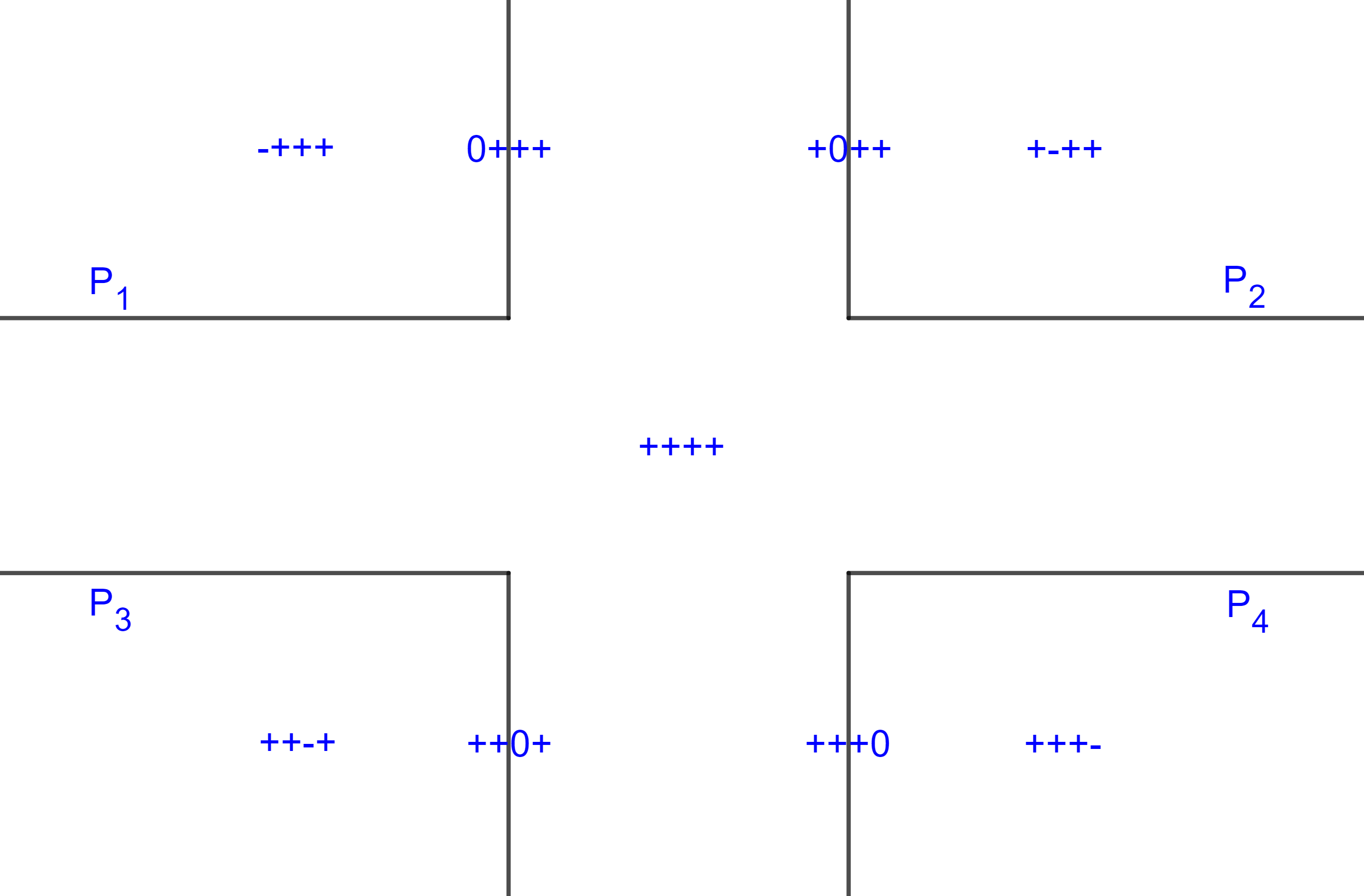}
	\caption{The Dehyperplane Arrangement $\mathcal{A}_{ex}$ in $\mathbb{R}^2$}
	\label{Ex1}
\end{figure}
\end{example}

\noindent We know that the restriction $\mathcal{A}^X := \{H \cap X \in L_{\mathcal{A}}\ |\ H \in \mathcal{A},\, X \nsubseteq H,\, H \cap X \neq \emptyset\}$ of a dehyperplane arrangement $\mathcal{A}$ on a flat $X \in L_{\mathcal{A}}$ is a dehyperplane arrangement in $X \in \mathscr{R}_{\dim X}$ \cite[Lemma~2.2]{Ra3}. The support of a face $F \in F_{\mathcal{A}}$ is the subset $\mathrm{s}(F) := \{H \in \mathcal{A}\ |\ \epsilon_H(F)=0\}$ of $\mathcal{A}$. The sets formed by the faces and the chambers of $\mathcal{A}^X$ are respectively $$F_{\mathcal{A}^X} := \{F \in F_{\mathcal{A}}\ |\ \mathrm{s}(F) \subseteq X\} \quad \text{and} \quad C_{\mathcal{A}^X} := \{F \in F_{\mathcal{A}}\ |\ \mathrm{s}(F) = X\}.$$

\noindent For $C,D \in C_{\mathcal{A}^X}$, let $\mathscr{H}(C,D) := \big\{H^{\epsilon_H(C)}\ \big|\ H \in \mathcal{A},\, H \cap X \in \mathcal{A}^X,\, \epsilon_H(C) = - \epsilon_H(D)\big\}$. Define $\mathrm{v}^X: C_{\mathcal{A}}^X \times C_{\mathcal{A}}^X \rightarrow R_{\mathcal{A}}$ by
$\mathrm{v}^X(C,C) = 1$ and $\displaystyle \mathrm{v}^X(C,D) = \prod_{H^{\varepsilon} \in \mathscr{H}(C,D)} q_H^{\varepsilon}$ if $C \neq D$.

\noindent We will see in Section~\ref{SeSe} that $F_{\mathcal{A}}$ forms a semigroup together with the binary operation defined as follows: If $F,G \in F_{\mathcal{A}}$, then $FG$ is the face in $F_{\mathcal{A}}$ such that, for every $H \in \mathcal{A}$, $$\displaystyle \epsilon_H(FG) := \begin{cases}
\epsilon_H(F) & \text{if}\ \epsilon_H(F) \neq 0, \\ \epsilon_H(G) & \text{otherwise} \end{cases}.$$

\noindent Extend the distance functions $\mathrm{v}^X$ to $\mathbf{v}: F_{\mathcal{A}} \times F_{\mathcal{A}} \rightarrow R_{\mathcal{A}}$, for $F, G \in F_{\mathcal{A}}$, by $$\displaystyle \mathbf{v}(F,G) := \mathrm{v}^{\mathrm{s}(FG)}(FG,GF).$$

\noindent The set formed by the minimal elements of $F_{\mathcal{A}}$ is $\min F_{\mathcal{A}} := \{F \in F_{\mathcal{A}}\ |\ \nexists G \in F_{\mathcal{A}}:\, G \prec F\}$. Assign a variable $x_F$ to each face $F \in F_{\mathcal{A}}$. 

\begin{definition}
Let $\mathcal{A}$ be a dehyperplane arrangement in $T \in \mathscr{R}_n$. The \textbf{Aguiar-Mahajan system} for $\mathcal{A}$ is the linear equation system $$\sum_{\substack{F \in F_{\mathcal{A}} \\ GF = G}} x_F\,\mathbf{v}(F,G) = 0 \quad \text{indexed by} \quad G \in F_{\mathcal{A}} \setminus \min F_{\mathcal{A}}.$$
\end{definition}

\noindent That system was introduced, and solved by Aguiar and Mahajan for central hyperplane arrangements \cite[Theorem~8.19]{AgMa}. A dehyperplane arrangement $\mathcal{A}$ in $T$ is said central if $\displaystyle \bigcap_{H \in \mathcal{A}} H \neq \emptyset$. In that case,  we will see in Lemma~\ref{LeCe} that, for every $F \in F_{\mathcal{A}}$, there exists $\tilde{F} \in F_{\mathcal{A}}$ such that $\epsilon_{\mathcal{A}}(F) = -\epsilon_{\mathcal{A}}(\tilde{F})$. Here is the second main result of this article.

\begin{theorem} \label{ThSy}
Let $\mathcal{A}$ be a dehyperplane arrangement in $T \in \mathscr{R}_n$. The solution space dimension of the Aguiar-Mahajan system of $\mathcal{A}$ is $\# \min F_{\mathcal{A}}$. In the particular case that $\mathcal{A}$ is central, then $\min F_{\mathcal{A}} = \{O\}$, and starting with an arbitrary value of $x_O$, there is a unique solution which can be computed recursively with the formula
$$x_G = \frac{-1}{1 - \mathbf{v}(G,\tilde{G}) \, \mathbf{v}(\tilde{G},G)} \sum_{\substack{F \in F_{\mathcal{A}} \\ F \prec G}} \big(x_F + (-1)^{\mathrm{rk}\,G}x_{\tilde{F}} \, \mathbf{v}(\tilde{G},G)\big).$$
\end{theorem}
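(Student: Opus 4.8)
The plan is to treat the two assertions separately. The dimension count will follow from a rank computation for the coefficient matrix of the system, and the central-case recursion will then be extracted by a downward induction on rank, once we know the solution space to be one-dimensional.

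For the dimension, note that the Aguiar-Mahajan system has one unknown $x_F$ for each face and one equation for each non-minimal face, so it suffices to show that the coefficient matrix $M$ --- rows indexed by $G \in F_{\mathcal{A}} \setminus \min F_{\mathcal{A}}$, columns by $F \in F_{\mathcal{A}}$, and entry $\mathbf{v}(F,G)$ when $GF = G$ and $0$ otherwise --- has full row rank over the fraction field of $R_{\mathcal{A}}$. I would prove this by exhibiting an invertible maximal square submatrix, namely the one obtained by keeping only the columns indexed by $F \in F_{\mathcal{A}} \setminus \min F_{\mathcal{A}}$. Two facts make this work: first, $GF = G$ is equivalent to $\mathrm{s}(G) \subseteq \mathrm{s}(F)$, so $\mathbf{v}(G,G) = 1$ sits on the diagonal; second, specializing every variable $q_H^{\varepsilon}$ to $0$ sends $\mathbf{v}(F,G)$ to $0$ unless $FG = GF$, which --- using $\mathrm{s}(FG) = \mathrm{s}(F) \cap \mathrm{s}(G)$ and the remaining semigroup identities of Section~\ref{SeSe} --- happens exactly when $F \preceq G$. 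Ordering rows and columns by non-decreasing dimension of the faces (so that $F \prec G$ forces $F$ to precede $G$), the specialized submatrix is lower-triangular with $1$'s on the diagonal; hence its determinant is a nonzero element of $R_{\mathcal{A}}$, the matrix $M$ has rank $\# F_{\mathcal{A}} - \# \min F_{\mathcal{A}}$, and the solution space has dimension $\# \min F_{\mathcal{A}}$.

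Assume now that $\mathcal{A}$ is central. Then $\bigcap_{H \in \mathcal{A}} H$ is a face $O$ whose sign sequence is identically $0$, so $O \preceq F$ for every face $F$; thus $\min F_{\mathcal{A}} = \{O\}$ and the solution space is one-dimensional by the first part. To produce the explicit recursion I would first record the distance identities special to the central case: from Lemma~\ref{LeCe} together with the equalities $G\tilde G = G$, $\tilde G G = \tilde G$, $\mathrm{s}(G\tilde G) = \mathrm{s}(G)$, one computes $\mathbf{v}(G,\tilde G)\,\mathbf{v}(\tilde G, G) = \prod_{H \notin \mathrm{s}(G)} q_H^+ q_H^-$, which is a non-constant monomial whenever $G \neq O$. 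Hence the denominator $1 - \mathbf{v}(G,\tilde G)\,\mathbf{v}(\tilde G, G)$ is a nonzero element of $R_{\mathcal{A}}$ and the recursion is well posed: each $x_G$ is expressed through the $x_F$ and $x_{\tilde F}$ with $F \prec G$, and since $\mathrm{rk}\,\tilde F = \mathrm{rk}\,F > \mathrm{rk}\,G$ these have already been computed when the faces are processed in decreasing order of rank, starting from the free value of $x_O$. Because the recursion is linear and homogeneous in $x_O$, it defines a one-dimensional family of tuples $(x_G)_{G \in F_{\mathcal{A}}}$; so it will be enough to check that every such tuple is a solution of the system, for then this family must coincide with the one-dimensional solution space.

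The crux of the proof --- and the step I expect to be the main obstacle --- is verifying that the recursively defined tuple satisfies the equation indexed by an arbitrary $G \in F_{\mathcal{A}} \setminus \{O\}$. I would substitute the formula into $\sum_{F :\, GF = G} x_F\, \mathbf{v}(F,G)$ and argue by induction on $\mathrm{rk}\,G$. The summation range $\{F : \mathrm{s}(G) \subseteq \mathrm{s}(F)\}$ I would split into $F = G$, the faces $F \prec G$, and the remaining ``transverse'' faces lying on the flat of $G$ or below it but not $\preceq G$; then, using the antipodal involution $F \mapsto \tilde F$ and the basic identities for $\mathbf{v}$ --- notably that $\mathbf{v}(F,G)\,\mathbf{v}(G,F)$ is a product of factors $q_H^+ q_H^-$ and that $\mathbf{v}$ is compatible with the face product --- I would collapse the transverse contributions in antipodal pairs, so that the coefficient of $x_G$ assembles to $1 - \mathbf{v}(G,\tilde G)\,\mathbf{v}(\tilde G, G)$ and the only surviving summands are the $x_F$ with $F \prec G$ together with their partners $x_{\tilde F}\,\mathbf{v}(\tilde G, G)$. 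The sign $(-1)^{\mathrm{rk}\,G}$ should emerge from an inclusion--exclusion over the order interval $[O,G]$ of the face poset, i.e.\ an alternating sum of the $\beta$-type multiplicities of the faces between $O$ and $G$. Getting this regrouping and the sign right, in the generality of dehyperplane arrangements --- where $L_{\mathcal{A}}$ need not be a geometric lattice and there is in general no coning to a central hyperplane arrangement --- is the delicate part; I would model the computation on Aguiar and Mahajan's proof of \cite[Theorem~8.19]{AgMa}, checking that it uses only the semigroup structure of $F_{\mathcal{A}}$ and the Varchenko-type identities for $\mathbf{v}$, neither of which requires the flats to be linear.
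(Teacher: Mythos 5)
Your treatment of the dimension count is correct and takes a genuinely different (and more self-contained) route than the paper: the paper adds the equations $x_F=\alpha_F$ for $F\in\min F_{\mathcal{A}}$, identifies the resulting matrix with the assembly $S_{\mathcal{A}}$, and invokes the explicit factorization of $\det S_{\mathcal{A}}$ into Varchenko determinants of restrictions (Proposition~\ref{PrAs}, hence ultimately Theorem~\ref{ThDet}); you only need nonvanishing of a maximal minor, which you get from the specialization $q_H^{\varepsilon}\mapsto 0$ and triangularity with respect to $\preceq$. That works (note only that $FG=GF$ alone is not equivalent to $F\preceq G$; it is the conjunction with $GF=G$ that forces $F\preceq G$, which is what your triangularity actually uses), and it buys independence from the determinant formula. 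One small slip: $F\prec G$ gives $\mathrm{rk}\,F<\mathrm{rk}\,G$, so the recursion is processed in \emph{increasing} rank starting from $O$, not decreasing.

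The second half, however, has a genuine gap. The entire difficulty of the central case is the identity you describe as ``collapsing the transverse contributions in antipodal pairs'' so that the coefficient of $x_G$ assembles to $1-\mathbf{v}(G,\tilde{G})\,\mathbf{v}(\tilde{G},G)$ with the sign $(-1)^{\mathrm{rk}\,G}$; you flag it as the main obstacle and then defer it to an analogy with \cite[Theorem~8.19]{AgMa}. But that analogy is precisely what must be proved in this setting: the paper devotes Section~\ref{SeWi} to extending the Witt identities (Proposition~\ref{Witt} and Corollary~\ref{CoWi}, proved by Euler-characteristic computations on closures of cells, where the non-lattice structure of $L_{\mathcal{A}}$ and the absence of coning actually bite), and additionally needs Lemma~\ref{LeSy}, which enlarges the Aguiar--Mahajan system to equations indexed by pairs $L\preceq G$ before Equation~\ref{EqF1} can be applied with $A=O$. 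Your ``inclusion--exclusion over the order interval $[O,G]$'' is a name for Corollary~\ref{CoWi}, not a proof of it, and without it neither direction of the argument (verifying that the recursive tuple solves the system, or deriving the recursion from the system as the paper does) goes through. As written, the proposal establishes the dimension statement but not the recursion.
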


\noindent This article is structured as follows: We prove in Section~\ref{SeSe} that $F_{\mathcal{A}}$ forms a semigroup together with the binary operation defined above. Then, we extend Witt identities to dehyperplane arrangements in Section~\ref{SeWi}. Those extensions are used at the end to compute $\det V_{\mathcal{A}}^K$ in Section~\ref{SeDet}, and to prove Theorem~\ref{ThSy} in Section~\ref{SeSy}.

\section{A Face Semigroup for Dehyperplane Arrangement} \label{SeSe}

\noindent We prove that $F_{\mathcal{A}}$ forms a semigroup with the operation $FG$ for $F,G \in F_{\mathcal{A}}$.

\begin{proposition} \label{PrSG}
Let $\mathcal{A}$ be a dehyperplane arrangement in $T \in \mathscr{R}_n$, and $F,G \in F_{\mathcal{A}}$. Then, there exists a unique face $E \in F_{\mathcal{A}}$ such that
$$\displaystyle \forall H \in \mathcal{A}:\,   \epsilon_H(E) = \begin{cases}
\epsilon_H(F) & \text{if}\ \epsilon_H(F) \neq 0, \\ \epsilon_H(G) & \text{otherwise} \end{cases}.$$
\end{proposition}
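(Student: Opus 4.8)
The plan is to prove existence and uniqueness separately. Uniqueness is immediate: a face of $\mathcal{A}$ is completely determined by its sign sequence $\epsilon_{\mathcal{A}}$, since $F = \bigcap_{H \in \mathcal{A}} H^{\epsilon_H(F)}$; so if two faces $E, E'$ satisfy the stated condition then $\epsilon_H(E) = \epsilon_H(E')$ for all $H \in \mathcal{A}$, hence $E = E'$. The content is therefore the \emph{existence} of a face with the prescribed sign vector, i.e.\ that the set $\bigcap_{H \in \mathcal{A}} H^{\sigma_H}$ with $\sigma_H = \epsilon_H(F)$ when $\epsilon_H(F) \neq 0$ and $\sigma_H = \epsilon_H(G)$ otherwise, is nonempty.

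First I would set up notation: let $X := \mathrm{s}(F) = \bigcap_{H :\, \epsilon_H(F) = 0} H$ be the support flat of $F$, so $F$ is (relatively) open in $X$ and $X \in L_{\mathcal{A}}$. By the condition defining a dehyperplane arrangement applied repeatedly, $\mathcal{A}^X$ is a dehyperplane arrangement in $X \in \mathscr{R}_{\dim X}$ (this is \cite[Lemma~2.2]{Ra3}, quoted in the excerpt). Now the key geometric observation is that $E$ should be the face of $\mathcal{A}$ whose sign agrees with $F$ off $X$ and which, \emph{inside} $X$, lies on the same side of each dehyperplane $H \cap X$ as the chamber-or-face cut out by $G$. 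Concretely, restrict attention to $X$: the face $G$ meets $X$ in the sense that $\epsilon_H(G) = \sigma_H$ for $H \in \mathrm{s}(F)$ picks out, for each dehyperplane $H \cap X$ of $\mathcal{A}^X$ with $X \nsubseteq H$, a side $(H \cap X)^{\epsilon_H(G)}$; and for the dehyperplanes $H$ of $\mathcal{A}$ with $X \subseteq H$ we simply have $H \cap X = X$, contributing no constraint. So the candidate set is $F \cap \bigcap_{H \in \mathcal{A}^X} (H\cap X)^{\epsilon_H(G)}$ intersected appropriately; I would argue this is nonempty by taking a point $p \in F$ and a point $g \in G$, and showing that a point on the segment (or a path in the contractible manifold $T$) from a neighbourhood of $p$ towards $g$, chosen close enough to $p$, lies in the desired open region — it keeps the (open) signs of $F$ on the dehyperplanes not containing $F$, and acquires the signs of $G$ on those that do, because near $p$ the only dehyperplanes through which the path can cross are exactly those in $\mathrm{s}(F) = \mathcal{A}_F$.

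The main obstacle — and where the topological (as opposed to linear-algebraic) nature of dehyperplane arrangements bites — is making that "move slightly from $p$ toward $g$" argument rigorous when $T$ is a general contractible manifold rather than $\mathbb{R}^n$: one cannot literally take straight segments, and one must control \emph{all} the dehyperplanes simultaneously, including the possibility that a dehyperplane $H \notin \mathcal{A}_F$ wiggles back near $p$. I expect the clean way around this is induction on $\#\mathrm{s}(F)$ (equivalently on $\mathrm{codim}\, X$): if $\mathrm{s}(F) = \emptyset$ then $F$ is a chamber and $E = F$ works trivially; otherwise pick $H_0 \in \mathrm{s}(F)$, pass to the dehyperplane arrangement $\mathcal{A}^{H_0}$ in $H_0$, apply the inductive hypothesis there to the faces $F, G$ viewed in $H_0$ to get a face $E_0$ of $\mathcal{A}^{H_0}$ with the right signs on all $H \in \mathcal{A} \setminus \{H_0\}$, and then use the defining property of a dehyperplane arrangement (the dichotomy "$H_0 \le X$ or $H_0$ properly cuts $X$") together with the fact that $E_0$ is relatively open in a flat meeting $H_0$ to push $E_0$ off $H_0$ into $H_0^{\epsilon_{H_0}(G)}$ while retaining all other signs — exactly the local "cross only $H_0$" picture, now justified one dehyperplane at a time. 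I would also double-check the edge case where $\epsilon_{H_0}(G) = 0$ as well (then $E$ stays on $H_0$ and no push is needed) so that the construction is uniform. Finally I would remark that this $E$ is what the excerpt subsequently denotes $FG$, and note for later use (Section~\ref{SeSe}) that the construction visibly gives $\epsilon_H(FG) = \epsilon_H(F)$ whenever $\epsilon_H(F)\neq 0$, which is the asymmetry underlying associativity.
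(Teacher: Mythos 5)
Your reduction of the statement to a nonemptiness claim, and your uniqueness argument via sign sequences, are both correct, and your first heuristic (perturb a point of $F$ slightly towards $G$) is exactly the intuition behind the paper's proof. The gap is in the inductive scheme you propose to make this rigorous. You pick $H_0 \in \mathrm{s}(F)$, pass to $\mathcal{A}^{H_0}$, and want to ``apply the inductive hypothesis there to the faces $F, G$ viewed in $H_0$''. But $G$ need not be a face of $\mathcal{A}^{H_0}$: if $\epsilon_{H_0}(G) \neq 0$ then $G \not\subseteq H_0$, and you are not given any face of $\mathcal{A}^{H_0}$ realizing the signs $\epsilon_H(G)$ for $H \in \mathrm{s}(F)$. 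Producing such a face is precisely a projection of $G$ onto a flat through $F$ --- an instance of the very statement being proved --- so the induction as written is circular. (Your final ``push $E_0$ off $H_0$'' step is also where the real geometric content sits rather than a formal consequence of the dichotomy in the definition, but it is of the same nature as the unproved local claim the paper itself invokes, so I weigh it less heavily.)

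The paper avoids the projection problem by a different decomposition. It splits $\mathcal{A} = \mathcal{K} \sqcup \mathcal{A}_F$ with $\mathcal{K} = \{H \in \mathcal{A} \mid \epsilon_H(F) \neq 0\}$ and $\mathcal{A}_F = \mathrm{s}(F)$ the centralization, sets $K = \bigcap_{H \in \mathcal{K}} H^{\epsilon_H(F)}$, and observes that $\bigcap_{H \in \mathcal{A}_F} H^{\epsilon_H(G)}$ is nonempty \emph{for free}, because it contains $G$ itself; hence it is a face of $\mathcal{A}_F$ and no projection of $G$ is ever needed. The only geometric input is then the single claim that, since $F \varsubsetneq K$, every face of $\mathcal{A}_F$ meets the apartment $K$, whence $E = K \cap \bigcap_{H \in \mathcal{A}_F} H^{\epsilon_H(G)}$ is nonempty and has the prescribed sign vector. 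To repair your plan, replace the restriction-to-$H_0$ induction by this centralization argument: all the sign data of $G$ is consumed at once through the tautological nonemptiness of the corresponding face of $\mathcal{A}_F$, and the inductive or local analysis is concentrated in the one claim that faces of $\mathcal{A}_F$ meet $K$.
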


\begin{proof}
Consider the subset $\mathcal{K} = \big\{H \in \mathcal{A}\ \big|\ \epsilon_H(F) \neq 0\big\}$, and the apartment $\displaystyle K = \bigcap_{H \in \mathcal{K}} H^{\epsilon_H(F)}$. If $\mathcal{K} = \mathcal{A}$, then $E = F$. Otherwise, we have $F \varsubsetneq K$, therefore for every $L \in F_{\mathcal{A}_F}$, $L \cap K \neq \emptyset$. Note that the set $\displaystyle \bigcap_{H \in \mathcal{A}_F} H^{\epsilon_H(G)}$ is nonempty since it contains $G$. As that set is a face of $\mathcal{A}_F$, we consequently obtain $\displaystyle E = K \cap \bigcap_{H \in \mathcal{A}_F} H^{\epsilon_H(G)}$.
\end{proof}

\begin{corollary} \label{CoSemi}
Given a dehyperplane arrangement $\mathcal{A}$ in $T \in \mathscr{R}_n$, the set $F_{\mathcal{A}}$ forms a semigroup together with the binary operation defined by: If $F,G \in F_{\mathcal{A}}$, then $FG$ is the face in $F_{\mathcal{A}}$ such that, for every $H \in \mathcal{A}$, $\displaystyle \epsilon_H(FG) := \begin{cases}
\epsilon_H(F) & \text{if}\ \epsilon_H(F) \neq 0, \\ \epsilon_H(G) & \text{otherwise} \end{cases}$.
\end{corollary}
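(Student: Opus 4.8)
The plan is to deduce Corollary~\ref{CoSemi} almost immediately from Proposition~\ref{PrSG}, so the only real content is verifying associativity; the existence and well-definedness of the product $FG$ is exactly what Proposition~\ref{PrSG} provides (the unique face $E$ with the prescribed sign sequence). First I would record that for any $F,G \in F_{\mathcal{A}}$, Proposition~\ref{PrSG} guarantees $FG \in F_{\mathcal{A}}$ is well-defined, so $F_{\mathcal{A}}$ is closed under the operation. Then I would turn to associativity, which is the heart of the argument.

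For associativity, the key observation is that the operation is defined sign-coordinate by sign-coordinate, and on each coordinate it is a purely combinatorial operation on $\{+,0,-\}$: namely, $\epsilon_H(FG)$ equals $\epsilon_H(F)$ if that is nonzero and $\epsilon_H(G)$ otherwise. This coordinatewise rule is the ``first nonzero'' selection, which is manifestly associative on the three-element set: for any $a,b,c \in \{+,0,-\}$, both $(a\ast b)\ast c$ and $a\ast(b\ast c)$ equal the first nonzero entry in the list $a,b,c$ (or $0$ if all three vanish). So I would check this tiny case distinction once — essentially, if $a\neq 0$ then both sides are $a$; if $a=0$ and $b\neq 0$ then both sides are $b$; if $a=b=0$ then both sides are $c$. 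Since two faces of $\mathcal{A}$ are equal if and only if they have the same sign sequence (each face is $\bigcap_{H} H^{\epsilon_H(F)}$, determined by $(\epsilon_H(F))_{H\in\mathcal{A}}$), it follows that $(FG)E$ and $F(GE)$ have identical sign sequences and are therefore the same face.

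Concretely I would write: by Proposition~\ref{PrSG}, for any $F, G \in F_{\mathcal{A}}$ the face $FG$ exists and is unique, hence the binary operation is well-defined on $F_{\mathcal{A}}$. For associativity, fix $F, G, E \in F_{\mathcal{A}}$ and $H \in \mathcal{A}$, and compute $\epsilon_H\big((FG)E\big)$ and $\epsilon_H\big(F(GE)\big)$ by cases on whether $\epsilon_H(F)$ and $\epsilon_H(G)$ vanish; in every case both equal the first nonzero term among $\epsilon_H(F), \epsilon_H(G), \epsilon_H(E)$, or $0$ if all three vanish. Since a face is determined by its sign sequence, $(FG)E = F(GE)$, so $F_{\mathcal{A}}$ is a semigroup.

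I do not expect any genuine obstacle here: the substantive work — showing that a face with a prescribed ``merged'' sign sequence actually exists inside the manifold, which is where the dehyperplane arrangement axioms and the apartment construction enter — has already been done in Proposition~\ref{PrSG}. The only mild subtlety is making sure the reader accepts that faces are in bijection with their realized sign sequences, which is immediate from the definition $F = \bigcap_{H\in\mathcal{A}} H^{\epsilon_H(F)}$; I would state this explicitly before invoking it so the associativity verification transfers cleanly from sign sequences to faces.
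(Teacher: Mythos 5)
Your proposal is correct and matches the paper's proof essentially verbatim: the paper likewise takes existence and uniqueness of $FG$ from Proposition~\ref{PrSG} and verifies associativity by the coordinatewise ``first nonzero'' case analysis, showing $\epsilon_H\big((EF)G\big) = \epsilon_H\big(E(FG)\big)$ for every $H \in \mathcal{A}$. Your explicit remark that a face is determined by its sign sequence is a reasonable addition that the paper leaves implicit.
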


\begin{proof}
It remains to prove the associativity of the binary operation. Let $E,F,G \in F_{\mathcal{A}}$, and $H \in \mathcal{A}$. Then,
$\displaystyle \epsilon_H\big((EF)G\big) =\ \begin{cases}
\epsilon_H(E) & \text{if}\ \epsilon_H(E) \neq 0, \\
\epsilon_H(F) & \text{if}\ \epsilon_H(E)=0\ \text{and}\ \epsilon_H(F) \neq 0, \\
\epsilon_H(G) & \text{otherwise} \end{cases}\ = \epsilon_H\big(E(FG)\big)$.	
\end{proof}

\noindent It is known that, if $\mathcal{A}$ is a central hyperplane arrangement , then $F_{\mathcal{A}}$ together with that binary operation is the Tits monoid \cite[§ 1.4.2]{AgMa}. Besides, if the generalized topological representation theorem stated in the introduction is proved, one could immediately conclude Corollary~\ref{CoSemi} from the fact that a conditional oriented matroid is a semigroup \cite[Proposition~2.12]{MaSaSt}.

\section{Witt Identities on Dehyperplane Arrangement} \label{SeWi}

\noindent We extend Witt identities to dehyperplane arrangements. They will be used later to compute $\det V_{\mathcal{A}}^K$, and to investigate the Aguiar-Mahajan system.

\begin{lemma} \label{LeCe}
If $\mathcal{A}$ is a central dehyperplane arrangement in $T \in \mathscr{R}_n$, then
$$\forall F \in F_{\mathcal{A}},\, \exists \tilde{F} \in F_{\mathcal{A}}:\ \forall H \in \mathcal{A}:\, \epsilon_H(\tilde{F}) = - \epsilon_H(F).$$
\end{lemma}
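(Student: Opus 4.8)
The plan is to prove the statement by induction on $n=\dim T$, with the case $\mathcal{A}=\varnothing$ (where $F_{\mathcal{A}}=\{T\}$ and one sets $\tilde T:=T$) serving both as the base case and as the treatment of all degenerate situations. So assume $\mathcal{A}\neq\varnothing$ and that the claim holds for every central dehyperplane arrangement living in a manifold of dimension $<n$. Before splitting into cases I would record three structural facts. First, centrality is inherited by restrictions: writing $O:=\bigcap_{H\in\mathcal{A}}H$, for any flat $X\in L_{\mathcal{A}}$ one has $O\subseteq X$, hence $\bigcap_{K\in\mathcal{A}^X}K\supseteq O\neq\varnothing$, so $\mathcal{A}^X$ is a central dehyperplane arrangement in $X\in\mathscr{R}_{\dim X}$ by \cite[Lemma~2.2]{Ra3}; the same applies when a dehyperplane $H_0\in\mathcal{A}$ plays the role of $X$. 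Second, two distinct dehyperplanes of $\mathcal{A}$ are never nested, and in fact meet in codimension $\ge 2$: this is forced by the arrangement axiom applied to the flat $H_1$ with the dehyperplane $H_2$, since $H_2^{\pm}\cap H_1\subseteq H_1\in\mathscr{R}_{n-1}$ rules out $H_2^{\pm}\cap H_1\in\mathscr{R}_n$, so $H_1\cap H_2\in\mathscr{R}_{i-1}$ with $i-1\le n-2$. Third, sign vectors are compatible under restriction: for a face $G\subseteq X$ of $\mathcal{A}$ and $H\in\mathcal{A}$ with $H\cap X\in\mathcal{A}^X$, one has $\epsilon_{H\cap X}(G)=\epsilon_H(G)$, with the natural orientation $(H\cap X)^{\pm}=H^{\pm}\cap X$.

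Case 1: $\mathrm{s}(F)\neq\varnothing$, so $F$ is not a chamber. Let $X:=\bigcap_{H\in\mathrm{s}(F)}H\in L_{\mathcal{A}}$ be the support flat of $F$; then $\dim X<n$ and $F$ is a chamber of $\mathcal{A}^X$. By the induction hypothesis there is a chamber $G$ of $\mathcal{A}^X$ with $\epsilon_K(G)=-\epsilon_K(F)$ for all $K\in\mathcal{A}^X$. Reading the signs of $G$ as a face of $\mathcal{A}$: for $H\in\mathrm{s}(F)$ one gets $\epsilon_H(G)=0=-\epsilon_H(F)$ since $G\subseteq X\subseteq H$, while for $H\notin\mathrm{s}(F)$ we have $X\not\subseteq H$, hence $H\cap X\in\mathcal{A}^X$ (nonempty because it contains $O$), and the compatibility fact gives $\epsilon_H(G)=-\epsilon_H(F)$. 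So $\tilde F:=G$ works.

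Case 2: $F=C$ is a chamber. Since $\mathcal{A}\neq\varnothing$ we have $\overline C\neq T$, so $C$ admits a facet; pick a facet $F_0$ of $C$, that is a face with $F_0\preceq C$ and $\dim F_0=n-1$. Its support is a single dehyperplane $\{H_0\}$: if $H_1,H_2\in\mathrm{s}(F_0)$ were distinct then $F_0\subseteq H_1\cap H_2$, of dimension $\le n-2$, a contradiction. Thus $F_0$ is a chamber of $\mathcal{A}^{H_0}$, a central dehyperplane arrangement in $H_0\in\mathscr{R}_{n-1}$, so by the induction hypothesis it has an opposite chamber $G$ in $\mathcal{A}^{H_0}$; reading its signs in $\mathcal{A}$ as in Case~1 (using $\mathrm{s}(F_0)=\{H_0\}$ and $\epsilon_H(F_0)=\epsilon_H(C)$ for $H\neq H_0$, which follows from $F_0\preceq C$) yields a face $G$ of $\mathcal{A}$ with $\epsilon_{H_0}(G)=0$ and $\epsilon_H(G)=-\epsilon_H(C)$ for all $H\neq H_0$. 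It remains to push $G$ off the wall $H_0$ to the correct side: pick $p\in G$; each $H^{-\epsilon_H(C)}$ with $H\neq H_0$ is open and contains $p$, while $p\in G\subseteq H_0=\overline{H_0^+}\cap\overline{H_0^-}$ shows every neighbourhood of $p$ meets $H_0^{-\epsilon_{H_0}(C)}$; a point $q$ lying in all of these satisfies $\epsilon_{\mathcal{A}}(q)=-\epsilon_{\mathcal{A}}(C)$, and the face $\tilde C$ containing $q$ is the required opposite of $C$.

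The main obstacle is Case~2, and within it the input that a chamber of a nonempty dehyperplane arrangement has a facet $F_0$ whose support consists of a single dehyperplane: this is precisely where one must use the topological content of the definition of a dehyperplane arrangement rather than pure combinatorics, and it rests on the facial structure developed in \cite{Ra3}. The non-nestedness of distinct dehyperplanes and the compatibility of sign vectors under restriction are the other two structural ingredients; both follow from the arrangement axioms but deserve to be stated explicitly since the reduction to lower dimension hinges on them.
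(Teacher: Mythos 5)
Your argument is correct in outline but follows a genuinely different route from the paper. The paper's proof is a single global construction: it runs a path starting at a point of $F$, crossing every dehyperplane of $\mathcal{A}\setminus\mathcal{A}_F$ exactly once, and takes $\tilde F$ to be the face containing the endpoint, with $\tilde C$ for a chamber $C$ then read off from the opposites of its boundary faces. You instead induct on $\dim T$: a non-chamber face is a chamber of the central restriction $\mathcal{A}^{X}$ to its support flat and is handled by the induction hypothesis, while a chamber $C$ is handled by taking the opposite of a facet $F_0\preceq C$ inside $\mathcal{A}^{H_0}$ and then perturbing off the wall, using the openness of the sides $H^{\pm}$ for $H\neq H_0$ together with $p\in H_0\subseteq\overline{H_0^{-\epsilon_{H_0}(C)}}$ to produce a point with sign vector $-\epsilon_{\mathcal{A}}(C)$. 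Your observation that centrality forces every $H\in\mathcal{A}$ to meet every flat (both contain $O$) is exactly what makes the reduction to $\mathcal{A}^{X}$ and $\mathcal{A}^{H_0}$ preserve all sign coordinates. What your version buys is that the only topology used is a single local perturbation near one point, which is easier to make rigorous than the paper's multi-crossing path; the price is the facet-existence input discussed below.

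Two caveats. First, you rightly flag as an unproved input the statement that a chamber of a nonempty arrangement has a facet whose support is a single dehyperplane; this is the load-bearing step of Case~2. It is consistent with the facial structure the paper itself invokes (the sets $F_{\mathcal{A}}^{n-1,D}$ and the boundary decompositions used in Propositions~\ref{Witt} and~\ref{PrD}), but it deserves either a proof (one can argue that $\overline{C}\setminus C$ is nonempty and cannot consist only of faces of dimension $\le n-2$, since removing a codimension-two set does not disconnect $T$) or a precise reference. Second, your non-nesting argument is incomplete as written: applying the arrangement axiom to the flat $H_1$ and the dehyperplane $H_2$ leaves open its first alternative $H_2\le H_1$, i.e.\ $H_1\subseteq H_2$, which your dimension count does not exclude. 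To rule it out, apply the axiom the other way, to the flat $H_2$ and the dehyperplane $H_1$: if $H_1\subsetneq H_2$, then $H_1\cap H_2=H_1\in\mathscr{R}_{n-1}$ forces $H_1^{\pm}\cap H_2\in\mathscr{R}_n$, impossible for subsets of the $(n-1)$-manifold $H_2$. With these two points addressed the proof is sound.
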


\begin{proof}
Let $F \in F_{\mathcal{A}} \setminus C_{\mathcal{A}}$, and define a path $\displaystyle p:[0,1] \rightarrow \bigcup_{H \in \mathcal{A}_F}H$ as follows: $p$ starts at a point $p(0) \in F$, crosses all the hyperplanes $H \in \mathcal{A} \setminus \mathcal{A}_F$, and ends at the $p(1)$ after those crossings. Then, $\tilde{F}$ is the face of $\mathcal{A}$ containing $p(1)$. Now, considering a chamber $C \in C_{\mathcal{A}}$, $\tilde{C}$ is the chamber of $\mathcal{A}$ such that $\displaystyle \partial \tilde{C} = \bigsqcup_{\substack{F \in F_{\mathcal{A}} \setminus C_{\mathcal{A}} \\ F \varsubsetneq \partial C}} \tilde{F}$.
\end{proof}

\begin{definition}
A \textbf{nested face} of a dehyperplane arrangement $\mathcal{A}$ in $T \in \mathscr{R}_n$ is a pair $(F,G)$ of faces in $F_{\mathcal{A}}$ such that $F \prec G$.
\end{definition}

\noindent For a nested face $(F,G)$ of $\mathcal{A}$, let $F_{\mathcal{A}}^{(F,G)}$ be the set of faces $\{L \in F_{\mathcal{A}}\ |\ F \preceq L \preceq G\}$.

\begin{proposition}
Let $\mathcal{A}$ be a dehyperplane arrangement in $T \in \mathscr{R}_n$, $D \in C_{\mathcal{A}}$, and $(A,D)$ a nested face of $\mathcal{A}$. Then, $C_{\mathcal{A}}$ has a chamber $\tilde{D}_A$ whose sign sequence is defined by
$$\forall H \in \mathcal{A}:\,  \epsilon_H(\tilde{D}_A) = \begin{cases} -\epsilon_H(D) & \text{if}\ \epsilon_H(A) = 0, \\
\epsilon_H(A) & \text{otherwise}. \end{cases}$$
\end{proposition}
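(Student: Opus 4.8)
The plan is to realize $\tilde{D}_A$ as the intersection of two sets: the open region cut out by the dehyperplanes \emph{not} containing $A$, each taken on the side of $D$; and a suitable chamber of the centralization $\mathcal{A}_A$ supplied by Lemma~\ref{LeCe}. First I would unwind the hypothesis. Since $(A,D)$ is a nested face with $D$ a chamber, $A$ is not a chamber (otherwise $A\preceq D$ between two chambers forces $A=D$), so the centralization $\mathcal{A}_A=\{H\in\mathcal{A}\mid\epsilon_H(A)=0\}$ is defined and nonempty; moreover $A\prec D$ gives $\epsilon_H(A)=\epsilon_H(D)$ for every $H\in\mathcal{A}\setminus\mathcal{A}_A$, because $\epsilon_H(A)\in\{0,\epsilon_H(D)\}$ and $\epsilon_H(A)\neq 0$ off the support $\mathrm{s}(A)$. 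Hence the sign sequence $\sigma=(\sigma_H)_{H\in\mathcal{A}}$ demanded in the statement has $\sigma_H=-\epsilon_H(D)$ for $H\in\mathcal{A}_A$ and $\sigma_H=\epsilon_H(D)$ otherwise, and no entry of $\sigma$ is $0$. So it is enough to show $\bigcap_{H\in\mathcal{A}}H^{\sigma_H}\neq\emptyset$: being nonempty, this set is a face of $\mathcal{A}$ by definition, and, having no zero sign, it is a chamber, which is then the required $\tilde{D}_A$.

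Next I would produce the two pieces whose intersection is that set. The centralization $\mathcal{A}_A$ is central, since $A\subseteq\bigcap_{H\in\mathcal{A}_A}H$; so, applying Lemma~\ref{LeCe} to the face $F:=\bigcap_{H\in\mathcal{A}_A}H^{\epsilon_H(D)}$ of $\mathcal{A}_A$ (which is nonempty as it contains $D$), I obtain a face $G\in F_{\mathcal{A}_A}$ with $\epsilon_H(G)=-\epsilon_H(D)$ for all $H\in\mathcal{A}_A$; as a subset of $T$ this reads $G=\bigcap_{H\in\mathcal{A}_A}H^{-\epsilon_H(D)}$, and $G$ is a chamber of $\mathcal{A}_A$ because none of its signs vanishes. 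On the other side I put $K:=\bigcap_{H\in\mathcal{A}\setminus\mathcal{A}_A}H^{\epsilon_H(D)}$, with $K:=T$ if $\mathcal{A}=\mathcal{A}_A$; it is an open subset of $T$ (a finite intersection of the open sets $H^+,H^-$), it contains $D$, and it contains $A$ because $\epsilon_H(A)=\epsilon_H(D)$ for $H\notin\mathcal{A}_A$. By construction $\bigcap_{H\in\mathcal{A}}H^{\sigma_H}=G\cap K$, so the statement reduces to showing $G\cap K\neq\emptyset$.

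To close this I would use that $A$ lies in the closure of $G$. One has $A\subseteq X_A:=\bigcap_{H\in\mathcal{A}_A}H$, which is the unique, all-zero minimal face of the central arrangement $\mathcal{A}_A$, so $X_A\preceq G$ in $F_{\mathcal{A}_A}$, and since the face order of a dehyperplane arrangement is inclusion of closures \cite{Ra3}, this gives $X_A\subseteq\overline{G}$, whence $A\subseteq\overline{G}$. Now fix any point $a\in A$: it lies in the open set $K$, so $K$ is a neighbourhood of $a$, while $a\in\overline{G}$; therefore $K$ meets $G$, i.e. $G\cap K\neq\emptyset$. Thus $\bigcap_{H\in\mathcal{A}}H^{\sigma_H}=G\cap K$ is nonempty, hence a chamber of $\mathcal{A}$ whose sign sequence is exactly $\sigma$, and this chamber is $\tilde{D}_A$.

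I expect the crux to be precisely the implication "$X_A\preceq G$ in $F_{\mathcal{A}_A}$ $\Rightarrow$ $X_A\subseteq\overline{G}$", that is, the geometric reading of the combinatorial face order (equivalently, the fact that the minimal face of a central dehyperplane arrangement lies in the closure of every chamber), together with the routine but essential observation that $H^+$ and $H^-$ are open, so that a nonempty intersection of $\overline{G}$ with the open set $K$ forces a nonempty intersection of $G$ with $K$. Everything else is bookkeeping with sign sequences and a single application of Lemma~\ref{LeCe}.
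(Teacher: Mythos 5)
Your proposal is correct and follows essentially the same route as the paper: both realize $\tilde{D}_A$ inside the apartment $K=\bigcap_{H\in\mathcal{A}\setminus\mathcal{A}_A}H^{\epsilon_H(A)}$ and obtain the opposite chamber of the central arrangement $\mathcal{A}_A$ from Lemma~\ref{LeCe}. The one difference is that you explicitly justify the nonemptiness of $G\cap K$ (via $A\subseteq\overline{G}\cap K$ with $K$ open), a point the paper subsumes in the unproved assertion $C_{\mathcal{A}}^K=\{C\cap K\mid C\in C_{\mathcal{A}_A}\}$.
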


\begin{proof}
Consider the apartment $\displaystyle K = \bigcap_{\substack{H \in \mathcal{A} \\ \epsilon_H(A) \neq 0}} H^{\epsilon_H(A)}$. We have $C_{\mathcal{A}}^K = \{C \cap K\ |\ C \in C_{\mathcal{A}_A}\}$. It is clear that $D \in C_{\mathcal{A}}^K$. Moreover, we know from Lemma~\ref{LeCe} that $C_{\mathcal{A}}^K$ has a chamber $\tilde{D}_A$ such that, for every $H \in \mathcal{A}_A$, $\epsilon_H(\tilde{D}_A) = -\epsilon_H(D)$, which is consequently the sought chamber.
\end{proof}

\begin{definition}
Let $\mathcal{A}$ be a dehyperplane arrangement in $T \in \mathscr{R}_n$, and define the integer $c_{\mathcal{A}} := \min \, \{\dim F\ |\ F \in F_{\mathcal{A}}\}$. The \textbf{rank} of a face $F \in F_{\mathcal{A}}$ is
$\mathrm{rk}\,F := \dim F - c_{\mathcal{A}}$.
\end{definition}

\noindent For a dehyperplane arrangement $\mathcal{A}$ in $T$ and $D \in C_{\mathcal{A}}$, denote by $F_{\mathcal{A}}^{n-1,D}$ the set of faces $\{F \in F_{\mathcal{A}}\ |\ F \preceq D,\, \dim F = n-1\}$. Besides, let $\chi$ be the function Euler characteristic of the structure of a topological space.

\begin{proposition}  \label{Witt}
Let $\mathcal{A}$ be a dehyperplane arrangement in $T \in \mathscr{R}_n$, $D \in C_{\mathcal{A}}$, and $(A,D)$ a nested face of $\mathcal{A}$. Then, $$\sum_{F \in F_{\mathcal{A}}^{(A,D)}} (-1)^{\mathrm{rk}\,F} \sum_{\substack{C \in C_{\mathcal{A}} \\ FC = D}} x_C \, = \, (-1)^{\mathrm{rk}\,D} \sum_{\substack{C \in C_{\mathcal{A}} \\ AC = \tilde{D}_A}} x_C.$$
\end{proposition}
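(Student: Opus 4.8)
The plan is to prove this as a statement about the ``Witt identity'' for the restricted arrangement living inside the apartment $K = \bigcap_{H:\ \epsilon_H(A)\neq 0} H^{\epsilon_H(A)}$, and then transport it to the whole arrangement. First I would set up the bijection $C_{\mathcal{A}}^K \cong C_{\mathcal{A}_A}$ via $C \mapsto C\cap K$, already recorded in the proof of the previous proposition; under this identification the faces $F$ with $A\preceq F\preceq D$ correspond exactly to the faces of $\mathcal{A}_A$ below the chamber $D\cap K$, and $\mathrm{rk}\,F$ (up to a global shift by $c_{\mathcal{A}}$, which cancels on both sides) agrees with the rank inside $\mathcal{A}_A$. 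So without loss of generality I may assume $A = O$ is the minimal face, $\mathcal{A}$ is central, $D$ is a chamber, and I must show
$$
\sum_{F\in F_{\mathcal{A}},\ F\preceq D} (-1)^{\mathrm{rk}\,F}\sum_{\substack{C\in C_{\mathcal{A}}\\ FC=D}} x_C
\;=\;(-1)^{\mathrm{rk}\,D}\sum_{\substack{C\in C_{\mathcal{A}}\\ C=\tilde D}} x_C
\;=\;(-1)^{\mathrm{rk}\,D}\,x_{\tilde D},
$$
where $\tilde D$ is the chamber opposite to $D$ from Lemma~\ref{LeCe} (here $A=O$ forces $\epsilon_H(\tilde D_A)=-\epsilon_H(D)$ for all $H$, i.e.\ $\tilde D_A=\tilde D$).

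Next I would swap the order of summation, rewriting the left side as $\sum_{C\in C_{\mathcal{A}}} x_C \, N(C)$ where
$$
N(C)\;=\;\sum_{\substack{F\preceq D\\ FC=D}} (-1)^{\mathrm{rk}\,F}.
$$
The condition $F\preceq D$ together with $FC=D$ means: on the support $\mathrm{s}(F)$ we must have $\epsilon_H(C)=\epsilon_H(D)$, and off the support $\epsilon_H(F)=\epsilon_H(D)$. In other words, writing $Z(C,D):=\{H\in\mathcal{A}\ |\ \epsilon_H(C)=\epsilon_H(D)\}$, the faces $F$ contributing to $N(C)$ are precisely those with $\epsilon_H(F)=\epsilon_H(D)$ for $H\notin Z(C,D)$ and $\epsilon_H(F)\in\{0,\epsilon_H(D)\}$ for $H\in Z(C,D)$ — that is, the faces in the interval $[G_C, D]$ of the face poset, where $G_C$ is the face with sign $\epsilon_H(G_C)=\epsilon_H(D)$ off $Z(C,D)$ and $0$ on $Z(C,D)$ (one must check $G_C$ is a genuine face, which follows because it is a face of the restriction $\mathcal{A}^X$ for the appropriate flat, using \cite[Lemma~2.2]{Ra3}). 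So $N(C)=\sum_{F\in[G_C,D]}(-1)^{\mathrm{rk}\,F}$, an alternating sum of $(-1)^{\dim}$ over an interval in the face poset of a dehyperplane arrangement.

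The heart of the argument is therefore to evaluate this alternating sum. I would identify the interval $[G_C,D]$ with the face poset of the arrangement $\mathcal{A}_{G_C}$ restricted to the chamber direction of $D$ — concretely, with $F_{\mathcal{B}}^{K'}$ for $\mathcal{B}=\mathcal{A}^{X}$ where $X$ is the flat $\bigcap_{H\in Z(C,D)}H$, intersected with the appropriate half-space region. The claim becomes: the sum $\sum (-1)^{\dim F}$ over the faces of a dehyperplane arrangement contained in a single closed chamber equals $(-1)^{n}$ if the arrangement is empty (one chamber) and otherwise telescopes; more precisely it equals $(-1)^{\dim D}$ when $Z(C,D)=\emptyset$ (i.e.\ $C=\tilde D$, giving $G_C=D$ so the interval is a single point) and $0$ otherwise. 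This is exactly an Euler-characteristic computation: a closed chamber $\overline D$ of a dehyperplane arrangement is a contractible (closed-ball-like) space stratified by the faces $F\preceq D$, so $\chi(\overline D)=\sum_{F\preceq D}(-1)^{\dim F}$ by the additivity of compactly-supported Euler characteristic, and contractibility pins this down; applying the same to the smaller closed chamber of $\mathcal{A}_{G_C}$ corresponding to the interval $[G_C,D]$ gives the vanishing unless the interval is trivial. The function $\chi$ was introduced in the excerpt precisely for this, and I expect the main obstacle to be making the stratification/Euler-characteristic bookkeeping rigorous in the manifold setting $T\in\mathscr{R}_n$ — in particular verifying that each closed chamber, and each closed chamber of a centralization restricted inside it, is contractible and admits a finite stratification with strata the faces, so that the inclusion–exclusion over $\dim$ is legitimate. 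Granting that, $N(C)=\delta_{C,\tilde D}\,(-1)^{\mathrm{rk}\,D}$ and the identity follows immediately; finally I would undo the reduction, re-inserting $A$ and $c_{\mathcal{A}}$, to recover the stated form with $\tilde D_A$.
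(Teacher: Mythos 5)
Your overall strategy is the paper's: interchange the two sums, evaluate the coefficient $N(C)=\sum_{F}(-1)^{\mathrm{rk}\,F}$ of each $x_C$ as a signed face count, and recognize it as an Euler-characteristic computation that vanishes except when $C$ lies opposite $D$ over $A$. The reduction to $\mathcal{A}_A$ is also what the paper does (via the bijection $f:F_{\mathcal{A}}^{A,n}\to F_{\mathcal{A}_A}$). But there is a genuine gap at the heart of your argument: the contributing faces do \emph{not} in general form an interval $[G_C,D]$ with $\mathrm{s}(G_C)=Z(C,D)$, because the putative face $G_C$ need not exist. Concretely, take three concurrent lines $H_1:\{y=0\}$, $H_2:\{y=x\}$, $H_3:\{y=-x\}$ in $\mathbb{R}^2$, let $D$ be the sector $0<\theta<45^{\circ}$ with sign vector $(+,-,+)$ and $C$ the sector $45^{\circ}<\theta<135^{\circ}$ with sign vector $(+,+,+)$. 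Then $Z(C,D)=\{H_1,H_3\}$, and a face with support exactly $\{H_1,H_3\}$ would have to lie in $H_1\cap H_3=\{O\}$ while avoiding $H_2$ — impossible, since $O\in H_2$. The set of contributing faces is $\{D,\,(0,-,+)\}$, an order filter of $[O,D]$ whose unique minimal element has support $\{H_1\}\subsetneq Z(C,D)$; in other examples the filter can even have several minimal elements. So the step ``the interval is the face poset of a closed chamber of $\mathcal{A}_{G_C}$, hence its alternating sum telescopes to $0$'' has no object to apply to, and the vanishing of $N(C)$ must be argued differently. The paper does this by writing
$$N(C)=(-1)^{-c_{\mathcal{A}}}\,\chi\Big(\overline{f(D)}\setminus\bigcup_{F\in F_{\mathcal{A}_A(C)}^{n-1,f(D)}}\overline{F}\Big),$$
i.e.\ the compactly supported Euler characteristic of the closed chamber with the facets lying on the \emph{separating} hyperplanes removed, and then uses that these facets pairwise meet in codimension $2$ so that the removed union is connected and the difference of Euler characteristics is $0$.

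A secondary but real issue is your appeal to contractibility: contractibility pins down the \emph{ordinary} Euler characteristic to $1$, whereas $\sum_{F}(-1)^{\dim F}$ over a stratification is the \emph{compactly supported} Euler characteristic, which for the relevant sets (closed non-compact cones, or closed chambers with part of their boundary removed) equals $0$ precisely because they are non-compact — not because they are contractible. This distinction is exactly what separates the vanishing cases of Proposition~\ref{Witt} from the case $C=D$ of Proposition~\ref{PrD}, where $\overline{D}$ is compact and the same sum equals $1$. As written, your argument would not let a reader see why one case gives $0$ and the other gives $\pm1$.
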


\begin{proof}
The proof is the extension of \cite[Proposition~4.2]{Ra1} to dehyperplane arrangements. We have $$\displaystyle \sum_{F \in F_{\mathcal{A}}^{(A,D)}} (-1)^{\mathrm{rk}\,F} \sum_{\substack{C \in C_{\mathcal{A}} \\ FC = D}} x_C = \sum_{C \in C_{\mathcal{A}}} \Big( \sum_{\substack{F \in F_{\mathcal{A}}^{(A,D)} \\ FC = D}} (-1)^{\mathrm{rk}\,F} \Big) x_C.$$ 
\begin{itemize}
\item If $\epsilon_{\mathcal{A}_A}(C) = \epsilon_{\mathcal{A}_A}(\tilde{D}_A)$, then $\displaystyle \sum_{\substack{F \in F_{\mathcal{A}}^{(A,D)} \\ FC = D}} (-1)^{\mathrm{rk}\,F} = (-1)^{\mathrm{rk}\,D}$.
\end{itemize}

\noindent Denote by $F_{\mathcal{A}}^{A,n}$ the set of faces $\{F \in F_{\mathcal{A}}\ |\ A \preceq F\} = \big\{F \in F_{\mathcal{A}}\ \big|\ \epsilon_{\mathcal{A} \setminus \mathcal{A}_A}(F) = \epsilon_{\mathcal{A} \setminus \mathcal{A}_A}(A)\big\}$. Let $f: F_{\mathcal{A}}^{A,n} \rightarrow F_{\mathcal{A}_A}$ be the bijection such that, if $F \in F_{\mathcal{A}}^{A,n}$, then $f(F)$ is the face of $\mathcal{A}_A$ having the property
$$\forall H \in \mathcal{A}_A:\, \epsilon_H\big(f(F)\big) = \epsilon_H(F).$$

\begin{itemize}
\item If $\epsilon_{\mathcal{A}_A}(C) = \epsilon_{\mathcal{A}_A}(D)$, then
$$\sum_{\substack{F \in F_{\mathcal{A}}^{(A,D)} \\ FC = D}} (-1)^{\mathrm{rk}\,F} = (-1)^{-c_{\mathcal{A}}} \sum_{F \in f\big(F_{\mathcal{A}}^{(A,D)}\big)} (-1)^{\dim F} = (-1)^{-c_{\mathcal{A}}} \chi\big(\overline{f(D)}\big) = 0.$$
\item The case $\epsilon_{\mathcal{A}_A}(C) \notin \big\{\epsilon_{\mathcal{A}_A}(D), \epsilon_{\mathcal{A}_A}(\tilde{D}_A)\big\}$ remains. Assume $\epsilon_H(D) = +$ for $H \in \mathcal{A}_A$, and define the dehyperplane arrangement $\mathcal{A}_A(C) := \big\{H \in \mathcal{A}_A\ \big|\ \epsilon_H(C) = -\big\}$. If $\#\mathcal{A}_A(C) > 1$ and $E \in F_{\mathcal{A}_A(C)}$, then $$\forall F \in F_{\mathcal{A}_A(C)}^{n-1,E},\, \exists F' \in F_{\mathcal{A}_A(C)}^{n-1,E} \setminus \{F\}:\ \mathrm{int}(\overline{F} \cap \overline{F'}) \in \mathscr{R}_{n-2}.$$
We obtain,
$$\sum_{\substack{F \in F_{\mathcal{A}}^{(A,D)} \\ FC = D}} (-1)^{\mathrm{rk}\,F} = \sum_{\substack{F \in f\big(F_{\mathcal{A}}^{(A,D)}\big) \\ \forall H \in \mathcal{A}_A(C):\ \epsilon_H(F) = +}} (-1)^{\mathrm{rk}\,F} = (-1)^{-c_{\mathcal{A}}} \chi\Big(\overline{f(D)} \setminus \bigcup_{F \in F_{\mathcal{A}_A(C)}^{n-1,f(D)}} \overline{F}\Big) = 0.$$
\end{itemize}
So $\displaystyle \sum_{F \in F_{\mathcal{A}}^{(A,D)}} (-1)^{\mathrm{rk}\,F} \sum_{\substack{C \in C_{\mathcal{A}} \\ FC = D}} x_C \, = \, (-1)^{\mathrm{rk}\,D} \sum_{\substack{C \in C_{\mathcal{A}} \\ \epsilon_{\mathcal{A}_A}(C) \, = \, \epsilon_{\mathcal{A}_A}(\tilde{D}_A)}} x_C \, = \, (-1)^{\mathrm{rk}\,D} \sum_{\substack{C \in C_{\mathcal{A}} \\ AC = \tilde{D}_A}} x_C$.
\end{proof}

\begin{corollary} \label{CoWi}
	Let $\mathcal{A}$ be a dehyperplane arrangement in $T \in \mathscr{R}_n$, $G \in F_{\mathcal{A}}$, and $(A,G)$ a nested face of $\mathcal{A}$. Then, 
	\begin{equation} \label{EqF1}
	\sum_{F \in F_{\mathcal{A}}^{(A,G)}} (-1)^{\mathrm{rk}\,F} \sum_{\substack{L \in F_{\mathcal{A}} \\ FL \preceq G}} x_L \, = \, (-1)^{\mathrm{rk}\,G} \sum_{\substack{L \in F_{\mathcal{A}} \\ AL = \tilde{G}_A}} x_L,
	\end{equation}
	\begin{equation} \label{EqF2}
	\sum_{F \in F_{\mathcal{A}}^{(A,G)}} (-1)^{\mathrm{rk}\,F} \sum_{\substack{L \in F_{\mathcal{A}} \\ FL = G}} x_L \, = \, (-1)^{\mathrm{rk}\,G} \sum_{\substack{L \in F_{\mathcal{A}} \\ AL \preceq \tilde{G}_A}} x_L.
	\end{equation}
\end{corollary}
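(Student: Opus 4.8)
The plan is to prove both identities by comparing, on the two sides, the coefficient of each formal variable $x_L$, and to evaluate those coefficients by recognizing them as Euler characteristics of closed cells of the central localization $\mathcal{A}_A$ — precisely the quantities already computed in the proof of Proposition~\ref{Witt}. Fix the nested face $(A,G)$. Swapping the order of summation, the coefficient of $x_L$ on the left of \eqref{EqF1} is $\sum(-1)^{\mathrm{rk}\,F}$ over the $F\in F_{\mathcal{A}}^{(A,G)}$ with $FL\preceq G$, and on the left of \eqref{EqF2} it is the analogous sum over the $F$ with $FL=G$; on the right it is $(-1)^{\mathrm{rk}\,G}$ when $AL=\tilde{G}_A$ (resp. $AL\preceq\tilde{G}_A$) and $0$ otherwise. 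First I would unwind the conditions. Since $F\in F_{\mathcal{A}}^{(A,G)}$ forces $\mathrm{s}(G)\subseteq\mathrm{s}(F)\subseteq\mathrm{s}(A)$, one checks that $FL\preceq G$ is equivalent to "$\epsilon_H(L)=0$ for all $H\in\mathrm{s}(G)$ (a condition on $L$ alone) together with $\mathrm{s}(F)\cap Z_L=\emptyset$", where $Z_L:=\{H\in\mathrm{s}(A)\setminus\mathrm{s}(G):\epsilon_H(L)=-\epsilon_H(G)\}$; and $FL=G$ is equivalent to "$\epsilon_H(L)=0$ on $\mathrm{s}(G)$ together with $\mathrm{s}(F)\setminus\mathrm{s}(G)\subseteq Y_L$", where $Y_L:=\{H\in\mathrm{s}(A)\setminus\mathrm{s}(G):\epsilon_H(L)=\epsilon_H(G)\}$. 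Likewise $AL=\tilde{G}_A$ amounts to $\epsilon_H(L)=0$ on $\mathrm{s}(G)$ and $\epsilon_H(L)=-\epsilon_H(G)$ on $\mathrm{s}(A)\setminus\mathrm{s}(G)$, while $AL\preceq\tilde{G}_A$ amounts to $\epsilon_H(L)=0$ on $\mathrm{s}(G)$ and $\epsilon_H(L)\in\{0,-\epsilon_H(G)\}$ on $\mathrm{s}(A)\setminus\mathrm{s}(G)$. In particular, if $\mathrm{s}(G)\not\subseteq\mathrm{s}(L)$ all four coefficients vanish, so I may assume $\mathrm{s}(G)\subseteq\mathrm{s}(L)$.

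Next I would transport $F_{\mathcal{A}}^{(A,G)}$ into $\mathcal{A}_A$ via the bijection $f:F_{\mathcal{A}}^{A,n}\to F_{\mathcal{A}_A}$ of the proof of Proposition~\ref{Witt}: it restricts to a poset isomorphism between $F_{\mathcal{A}}^{(A,G)}$ and the face poset of the closed cell $\overline{f(G)}$ of the central arrangement $\mathcal{A}_A$, and it preserves dimension, so $(-1)^{\mathrm{rk}\,F}=(-1)^{-c_{\mathcal{A}}}(-1)^{\dim f(F)}$. The structural fact to record here is that, $\mathcal{A}_A$ being central, every $H\in\mathcal{A}_A$ not containing $f(G)$ passes through the center $O$ of $\mathcal{A}_A$, which satisfies $O\preceq f(G)$, hence meets $\overline{f(G)}$ in one of its facets; so the facet-hyperplanes of $\overline{f(G)}$ are exactly $\mathrm{s}(A)\setminus\mathrm{s}(G)$. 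Under this dictionary, $\mathrm{s}(F)\cap Z_L=\emptyset$ says $f(F)$ lies on no facet-hyperplane in $Z_L$, and $\mathrm{s}(F)\setminus\mathrm{s}(G)\subseteq Y_L$ says $f(F)$ lies only on facet-hyperplanes in $Y_L$. Therefore, up to the global sign $(-1)^{-c_{\mathcal{A}}}$, the left coefficient in \eqref{EqF1} is the alternating sum $\sum(-1)^{\dim}$ over the faces of $\overline{f(G)}$ lying in $\overline{f(G)}\setminus\bigcup_{H\in Z_L}H$, and the one in \eqref{EqF2} is the same sum over $\overline{f(G)}\setminus\bigcup_{H\in(\mathrm{s}(A)\setminus\mathrm{s}(G))\setminus Y_L}H$.

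Then I would invoke the three Euler characteristic evaluations already carried out inside the proof of Proposition~\ref{Witt}: for the (positive-dimensional, since $A\prec G$ gives $f(G)\neq O$) cone $\overline{f(G)}$, the alternating face count is $0$ over the whole cell, $0$ over the cell with a nonempty proper sub-union of its facet-hyperplanes deleted, and $(-1)^{\dim f(G)}$ over the relatively open cell, i.e. when all facet-hyperplanes are deleted. Feeding in the two regions above: the left coefficient in \eqref{EqF1} equals $(-1)^{\mathrm{rk}\,G}$ precisely when $Z_L=\mathrm{s}(A)\setminus\mathrm{s}(G)$, that is (with $\mathrm{s}(G)\subseteq\mathrm{s}(L)$) precisely when $AL=\tilde{G}_A$, and is $0$ otherwise; the left coefficient in \eqref{EqF2} equals $(-1)^{\mathrm{rk}\,G}$ precisely when $Y_L=\emptyset$, that is precisely when $AL\preceq\tilde{G}_A$, and is $0$ otherwise. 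This matches the right-hand sides in both cases and proves the corollary. (As a cross-check one could instead prove one identity and deduce the other: for $F\preceq G$ one has $FL\preceq G\iff FL\in F_{\mathcal{A}}^{(F,G)}$, so \eqref{EqF1} is the zeta transform of \eqref{EqF2} over the intervals $F_{\mathcal{A}}^{(F,G)}$, whose Möbius function is $(-1)^{\mathrm{rk}\,G-\mathrm{rk}\,\cdot}$ by the same Euler relation, and $\tilde{(\cdot)}_A$ is a rank-preserving involution; but the direct coefficient computation settles both at once.)

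\textbf{Main obstacle.} The delicate point is the passage in the second and third paragraphs: translating the semigroup conditions $FL\preceq G$ and $FL=G$ into "which facets of $\overline{f(G)}$ the face $f(F)$ is permitted to touch", and, above all, justifying the Euler characteristic evaluations in the dehyperplane (topological, not polytopal) setting — namely that deleting a sub-union of facet-hyperplanes from the closed cell $\overline{f(G)}$ leaves a space whose alternating face count behaves exactly as for a convex cone. This requires the defining axioms of a dehyperplane arrangement (the local model $H\cap X\in\mathscr{R}_{i-1}$, $H^{+}\cap X,\,H^{-}\cap X\in\mathscr{R}_i$), exactly as in the proof of Proposition~\ref{Witt}. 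The remaining bookkeeping — the degenerate case $\mathrm{s}(G)\not\subseteq\mathrm{s}(L)$ and the non-essential case $c_{\mathcal{A}}>0$, where the cells of $\mathcal{A}_A$ carry a lineality factor that is absorbed into the sign $(-1)^{-c_{\mathcal{A}}}$ — should go through without new ideas.
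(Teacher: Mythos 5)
Your argument is correct, and it runs on the same engine as the paper's proof: interchange the two summations, identify the coefficient of each $x_L$ as an alternating sum of $(-1)^{\mathrm{rk}\,F}$ over a subinterval of $F_{\mathcal{A}}^{(A,G)}$, and evaluate that sum by the Euler-characteristic computations underlying Proposition~\ref{Witt}. The difference is in how the reduction is performed. The paper does it by an algebraic substitution: it rewrites $FL \preceq G$ as $F(LG)=G$ and $FL=G$ as $F(L\tilde{G}_A)=G$, applies Proposition~\ref{Witt} with $D=G$ and $C=LG$ (resp.\ $C=L\tilde{G}_A$), and then simplifies the resulting conditions $ALG=\tilde{G}_A$ and $AL\tilde{G}_A=\tilde{G}_A$ to $AL=\tilde{G}_A$ and $AL\preceq\tilde{G}_A$. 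You instead unwind $FL\preceq G$ and $FL=G$ into sign conditions on $\mathrm{s}(F)$ relative to your sets $Y_L,Z_L$ and re-run the three Euler-characteristic evaluations on $\overline{f(G)}$ directly; I checked your translations of all four conditions and they are right. What the paper's route buys is brevity; what yours buys is that it covers the general-face case honestly, since Proposition~\ref{Witt} is stated only for chambers $C,D\in C_{\mathcal{A}}$ and the paper's substitution $C=LG$, $D=G$ implicitly extends it to arbitrary faces, which your computation does explicitly at the level of $F_{\mathcal{A}_A}$. Both arguments ultimately rest on the same topological facts about alternating face counts of a cell of $\mathcal{A}_A$ with a sub-union of its bounding dehyperplanes removed — the point you correctly flag as the real content, and which is justified here exactly to the extent it is justified in the proof of Proposition~\ref{Witt}.
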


\begin{proof}
	We have $\displaystyle \sum_{F \in F_{\mathcal{A}}^{(A,G)}} (-1)^{\mathrm{rk}\,F} \sum_{\substack{L \in F_{\mathcal{A}} \\ FL \preceq G}} x_L = \sum_{L \in F_{\mathcal{A}}} \Big( \sum_{\substack{F \in F_{\mathcal{A}}^{(A,G)} \\ FL \preceq G}} (-1)^{\mathrm{rk}\,F} \Big) x_L$. As the condition $FL \leq G$ is equivalent to $FLG = G$, using Proposition~\ref{Witt} with $D=G$ and $C=LG$, we get
	$$\sum_{L \in F_{\mathcal{A}}} \Big( \sum_{\substack{F \in F_{\mathcal{A}}^{(A,G)} \\ FL \preceq G}} (-1)^{\mathrm{rk}\,F} \Big) x_L = (-1)^{\mathrm{rk}\,G} \sum_{\substack{L \in F_{\mathcal{A}} \\ ALG = \tilde{G}_A}} x_L = (-1)^{\mathrm{rk}\,G} \sum_{\substack{L \in F_{\mathcal{A}} \\ AL = \tilde{G}_A}} x_L.$$
	Similarly, $\displaystyle \sum_{F \in F_{\mathcal{A}}^{(A,G)}} (-1)^{\mathrm{rk}\,F} \sum_{\substack{L \in F_{\mathcal{A}} \\ FL = G}} x_L = \sum_{L \in F_{\mathcal{A}}} \Big( \sum_{\substack{F \in F_{\mathcal{A}}^{(A,G)} \\ FL = G}} (-1)^{\mathrm{rk}\,F} \Big) x_L$. As the condition $FL = G$ is equivalent to $FL\tilde{G}_A = \tilde{G}_A$ for every $A \in \{F \in \min F_{\mathcal{A}}\ |\ F \preceq G\}$. Using Proposition~\ref{Witt} with $D=G$ and $C=L\tilde{G}_A$, we get
	$$\sum_{L \in F_{\mathcal{A}}} \Big( \sum_{\substack{F \in F_{\mathcal{A}}^{(A,G)} \\ FL = G}} (-1)^{\mathrm{rk}\,F} \Big) x_L = (-1)^{\mathrm{rk}\,G} \sum_{\substack{L \in F_{\mathcal{A}} \\ AL\tilde{G}_A = \tilde{G}_A}} x_L = (-1)^{\mathrm{rk}\,G} \sum_{\substack{L \in F_{\mathcal{A}} \\ AL \preceq \tilde{G}_A}} x_L.$$
\end{proof}

\noindent Denote by $\breve{C}_{\mathcal{A}}$ the set formed by the bounded chambers of a dehyperplane arrangement $\mathcal{A}$. And the set of faces composing the closure of a chamber $D \in C_{\mathcal{A}}$ is $F_{\overline{D}} := \{F \in F_{\mathcal{A}}\ |\ F \preceq D\}$.

\begin{proposition} \label{PrD} 
Let $\mathcal{A}$ be a dehyperplane arrangement in $T \in \mathscr{R}_n$, and assume $\breve{C}_{\mathcal{A}} \neq \emptyset$. Then, $$\forall D \in \breve{C}_{\mathcal{A}}:\ \sum_{F \in F_{\overline{D}}} (-1)^{\mathrm{rk}\,F} \sum_{\substack{C \in C_{\mathcal{A}} \\ FC = D}} x_C = (-1)^{c_{\mathcal{A}}} x_D.$$
\end{proposition}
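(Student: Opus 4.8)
The plan is to derive Proposition~\ref{PrD} as a special case of Corollary~\ref{CoWi}, by choosing the nested face cleverly and exploiting the hypothesis that $D$ is a bounded chamber. The first step is to locate a minimal face $A$ of $\mathcal{A}$ with $A \preceq D$ and to observe that, since $D$ is a chamber, the interval $F_{\mathcal{A}}^{(A,D)}$ is precisely $F_{\overline{D}}$ intersected with the faces above $A$; in fact if we sum over all $A \in \min F_{\mathcal{A}}$ with $A \preceq D$ and use inclusion--exclusion on the supports, the union of these intervals covers $F_{\overline{D}}$. So the left-hand side of the Proposition should be expressible through the left-hand sides of the identities in Corollary~\ref{CoWi}.

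Next I would analyze the right-hand side. Applying \eqref{EqF1} with $G = D$ gives $(-1)^{\mathrm{rk}\,D}\sum_{AL = \tilde{D}_A} x_L$, and the key point is that when $D$ is bounded, the chamber $\tilde{D}_A$ (whose sign sequence agrees with $A$ off $\mathcal{A}_A$ and is $-\epsilon_H(D)$ on $\mathcal{A}_A$) behaves in a controlled way: as $A$ ranges over the minimal faces below $D$, the faces $L$ with $AL = \tilde{D}_A$ are very restricted. The bounded hypothesis is what forces $F_{\overline{D}}$ to be a closed ball (its closure $\overline{D}$ is compact and contractible, so $\chi(\overline{D}) = 1$, i.e. $\sum_{F \preceq D}(-1)^{\dim F} = 1$), and more refined versions of this Euler-characteristic computation — exactly of the flavor already used in the proof of Proposition~\ref{Witt} — should collapse the right-hand sum to the single term $x_D$ with sign $(-1)^{-c_{\mathcal{A}}} = (-1)^{c_{\mathcal{A}}}$. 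Concretely: among all $L$ appearing, only $L = D$ survives because every proper face contributes with cancelling signs coming from the Euler characteristic of a ball minus a subcomplex, while the $\mathrm{rk}$-to-$\dim$ conversion produces the shift $(-1)^{-c_{\mathcal{A}}}$.

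The cleanest route may be the direct one: rewrite $\sum_{F \in F_{\overline{D}}}(-1)^{\mathrm{rk}\,F}\sum_{FC = D} x_C = \sum_{C \in C_{\mathcal{A}}}\bigl(\sum_{F \preceq D,\, FC = D}(-1)^{\mathrm{rk}\,F}\bigr)x_C$, and evaluate the inner coefficient. The condition $FC = D$ with $F \preceq D$ means $\epsilon_H(C) = \epsilon_H(D)$ for every $H \notin \mathrm{s}(F)$; so the relevant $F$ are those with support inside $\{H : \epsilon_H(C) = \epsilon_H(D)\}$, and the inner sum is $(-1)^{-c_{\mathcal{A}}}$ times the Euler characteristic of the subcomplex of $\overline{D}$ spanned by those faces. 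When $C = D$ this subcomplex is all of $\overline{D}$, a ball, contributing $1$; when $C \neq D$ it is $\overline{D}$ minus the union of the closed facets on which $C$ and $D$ disagree — and here I need boundedness of $D$ to guarantee that this is either empty or contractible-to-a-point-with-an-open-face-removed, hence has Euler characteristic $0$, just as in the last bullet of the proof of Proposition~\ref{Witt}. The main obstacle, and the step I expect to require the most care, is precisely this topological vanishing: showing that for a bounded chamber $D$ and any $C \neq D$, the set $\overline{D}\setminus\bigcup_{H:\,\epsilon_H(C)\neq\epsilon_H(D)}\overline{D\cap H^0}$ has vanishing Euler characteristic. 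For unbounded $D$ this can fail (removing a "wall at infinity" changes nothing), which is exactly why the hypothesis $\breve{C}_{\mathcal{A}} \neq \emptyset$ and $D \in \breve{C}_{\mathcal{A}}$ appears; the bounded case should reduce, after identifying $\overline{D}$ with a convex polytope-like ball via the dehyperplane axioms, to the standard fact that a ball with a nonempty proper closed "lower boundary" piece removed is still contractible (Euler characteristic $1$ minus $1$ from the removed collapsing part, giving $0$), which I would phrase using the shelling-type argument implicit in \cite[Proposition~4.2]{Ra1} rather than re-deriving it.
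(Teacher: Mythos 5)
Your ``direct route'' in the final paragraph is exactly the paper's proof: swap the order of summation, identify the inner coefficient as $(-1)^{c_{\mathcal{A}}}$ times the combinatorial Euler characteristic of $\overline{D}$ minus the union of the closed walls $\overline{D}\cap H$ for $H$ with $\epsilon_H(C)\neq\epsilon_H(D)$, and use boundedness of $D$ to get $\chi(\overline{D})=1$ for $C=D$ and the vanishing for $C\neq D$. (The detour through Corollary~\ref{CoWi} in your first two paragraphs is unnecessary, and note one small slip: $FC=D$ with $F\preceq D$ forces $\epsilon_H(C)=\epsilon_H(D)$ for $H\in\mathrm{s}(F)$, not for $H\notin\mathrm{s}(F)$ --- though the conclusion you then draw about which $F$ survive is the correct one.)
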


\begin{proof}
Let $\{F_i\}_{i \in I} \varsubsetneq F_{\mathcal{A}}^{n-1,D}$ such that, if $\#I>1$, then
$$\forall i \in I,\, \exists j \in I \setminus \{i\}:\ \dim \overline{F_i} \cap \overline{F_j} = n-2.$$ 
We will use $\displaystyle \chi\Big(\overline{D} \setminus \bigcup_{i \in I} \overline{F_i}\Big) = \chi(\overline{D}) - \chi\Big(\bigcup_{i \in I} \overline{F_i}\Big) = 0$ to prove Proposition~\ref{PrD}. Consider now
$$\displaystyle \sum_{F \in F_{\overline{D}}} (-1)^{\mathrm{rk}\,F} \sum_{\substack{C \in C_{\mathcal{A}} \\ FC = D}} x_C = \sum_{C \in C_{\mathcal{A}}} \Big( \sum_{\substack{F \in F_{\overline{D}} \\ FC = D}} (-1)^{\mathrm{rk}\,F} \Big) x_C.$$
If $C \neq D$, define the dehyperplane arrangement $\mathcal{A}_{C,D} := \{H \in \mathcal{A}\ |\ \epsilon_{H}(C) \neq \epsilon_{H}(D)\}$. Remark that if $\#\mathcal{A}_{C,D} > 1$, then $$\forall H \in \mathcal{A}_{C,D},\, \exists H' \in \mathcal{A}_{C,D} \setminus \{H\}:\ \dim \overline{H} \cap \overline{H'} = n-2.$$ We obtain
$$\sum_{\substack{F \in F_{\overline{D}} \\ FC = D}} (-1)^{\mathrm{rk}\,F} = (-1)^{c_{\mathcal{A}}} \sum_{\substack{F \in F_{\overline{D}} \\ \forall H \in \mathcal{A}_{C,D}:\ \epsilon_H(F) \neq 0}} (-1)^{\dim F} = (-1)^{c_{\mathcal{A}}} \chi\Big(\overline{D} \setminus \bigcup_{H \in \mathcal{A}_{C,D}} \overline{D} \cap H\Big) = 0.$$
If $C = D$, then $\displaystyle \sum_{\substack{F \in F_{\overline{D}} \\ FC = D}} (-1)^{\mathrm{rk}\,F} = (-1)^{c_{\mathcal{A}}} \chi(\overline{D}) = (-1)^{c_{\mathcal{A}}}$.
\end{proof}

\section{Proof of Theorem~\ref{ThDet}} \label{SeDet}

\noindent We compute $\det V_{\mathcal{A}}^K$, and justify the definition of a face multiplicity.

\begin{lemma} \label{CFD}
Let $\mathcal{A}$ be a dehyperplane arrangement in $T \in \mathscr{R}_n$, $C, D \in C_{\mathcal{A}}$, and $F \in F_{\mathcal{A}}$ such that $F \preceq C$. Then, $$\mathrm{v}(C,D) = \mathrm{v}(C,FD) \, \mathrm{v}(FD,D).$$
\end{lemma}

\begin{proof} We know that $FD$ is a chamber in $C_{\mathcal{A}}^K = \{E \cap K\ |\ E \in C_{\mathcal{A}_F}\}$ with $\displaystyle K = \bigcap_{\substack{H \in \mathcal{A} \\ \epsilon_H(F) \neq 0}} H^{\epsilon_H(F)}$.
\begin{itemize}
\item If $FD = C$, then $\mathscr{H}(C,FD) \sqcup \mathscr{H}(FD,D) = \mathscr{H}(C,C) \sqcup \mathscr{H}(C,D) = \mathscr{H}(C,D)$.
\item Else, $\mathscr{H}(C,FD) \sqcup \mathscr{H}(FD,D)$ is equal to
$$\big\{P^{\epsilon_P(C)}\ \big|\ P \in \mathcal{A}_F,\, \epsilon_P(C) \neq \epsilon_P(FD)\big\} \sqcup \big\{Q^{\epsilon_Q(FD)}\ \big|\ Q \in \mathcal{A} \setminus \mathcal{A}_F,\, \epsilon_Q(FD) \neq \epsilon_Q(D)\big\}$$ which is $\mathscr{H}(C,D)$ since $\epsilon_Q(FD) = \epsilon_Q(C)$ for every $Q \in \mathcal{A} \setminus \mathcal{A}_F$.
\end{itemize}
\end{proof}

\noindent For a dehyperplane arrangement $\mathcal{A}$ in $T$, $\displaystyle M_{\mathcal{A}} := \Big\{\sum_{C \in C_{\mathcal{A}}} x_C C\ \Big|\ x_C \in R_{\mathcal{A}}\Big\}$ is the module of $R_{\mathcal{A}}$-linear combinations of chambers in $C_{\mathcal{A}}$. Let $\{C^*\}_{C \in C_{\mathcal{A}}}$ be the dual basis of the basis $C_{\mathcal{A}}$ of $M_{\mathcal{A}}$. Define the linear map $\gamma_{\mathcal{A}}: M_{\mathcal{A}} \rightarrow M_{\mathcal{A}}^*$, for $D \in C_{\mathcal{A}}$, by $\displaystyle \gamma_{\mathcal{A}}(D) := \sum_{C \in C_{\mathcal{A}}} \mathrm{v}(D,C)\, C^*$. For a nested face $(A,D)$ with $D \in C_{\mathcal{A}}$, let $\displaystyle \mathrm{m}(A,D) := \sum_{\substack{C \in C_{\mathcal{A}} \\ AC = D}} \mathrm{v}(D,C)\, C^* \in M_{\mathcal{A}}^*$.

\noindent Define the extension ring $\displaystyle B_{\mathcal{A}} := \bigg\{\frac{p}{\displaystyle \prod_{F \in F_{\mathcal{A}}  \setminus C_{\mathcal{A}}} (1 - \mathrm{b}_F)^{k_F}}\ \bigg|\ p \in R_{\mathcal{A}},\, k_F \in \mathbb{N}\bigg\}$ of $R_{\mathcal{A}}$.

\begin{proposition} \label{mAD}
Let $\mathcal{A}$ be a dehyperplane arrangement in $T \in \mathscr{R}_n$, $D \in C_{\mathcal{A}}$, and $(A,D)$ a nested face of $\mathcal{A}$. Then, $$\mathrm{m}(A,D) = \sum_{C \in C_{\mathcal{A}}} x_C \, \gamma_{\mathcal{A}}(C) \quad \text{with} \quad x_C \in B_{\mathcal{A}}.$$
\end{proposition}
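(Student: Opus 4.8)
The plan is to specialise the Witt identity of Proposition~\ref{Witt} at $x_C\mapsto \mathrm v(D,C)\,C^*$, turn the resulting identity in $M_{\mathcal A}^*$ into a recursion by invoking it twice, and then induct on $\mathrm{rk}\,D-\mathrm{rk}\,A$. Before that I would record a few sign-sequence facts. Since $D$ is a chamber, $\epsilon_H(D)\neq 0$ for all $H$, hence $DC=D$ for every $C\in C_{\mathcal A}$, so $\sum_{C:\,DC=D}\mathrm v(D,C)\,C^*=\gamma_{\mathcal A}(D)$; it is thus consistent to set $\mathrm m(D,D):=\gamma_{\mathcal A}(D)$. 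For a nested face $(A,D)$, reading off the sign sequence of $\tilde D_A$ gives $\mathscr H(D,\tilde D_A)=\{H^{\epsilon_H(D)}\mid H\in\mathcal A_A\}$ and $\mathscr H(\tilde D_A,D)=\{H^{-\epsilon_H(D)}\mid H\in\mathcal A_A\}$, whence $\mathrm v(D,\tilde D_A)\,\mathrm v(\tilde D_A,D)=\mathrm b_A$; similarly $\tilde{(\tilde D_A)}_A=D$ and $\mathrm{rk}\,\tilde D_A=\mathrm{rk}\,D$ (chambers are $n$-dimensional). Finally, for $C$ with $AC=\tilde D_A$, Lemma~\ref{CFD} applied with $F=A$ (legitimate because $A\preceq D$) gives $\mathrm v(D,C)=\mathrm v(D,AC)\,\mathrm v(AC,C)=\mathrm v(D,\tilde D_A)\,\mathrm v(\tilde D_A,C)$.

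Plugging $x_C\mapsto \mathrm v(D,C)\,C^*$ into Proposition~\ref{Witt} and using these facts, each inner sum on the left becomes $\mathrm m(F,D)$ (with the top term $\mathrm m(D,D)=\gamma_{\mathcal A}(D)$) and the right-hand side factors through $\mathrm v(D,\tilde D_A)$, yielding
\begin{equation}\label{eqStar}
\sum_{F\in F_{\mathcal A}^{(A,D)}}(-1)^{\mathrm{rk}\,F}\,\mathrm m(F,D)=(-1)^{\mathrm{rk}\,D}\,\mathrm v(D,\tilde D_A)\,\mathrm m(A,\tilde D_A),
\end{equation}
valid for every nested face $(A,D)$ with $D\in C_{\mathcal A}$. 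Since $(A,\tilde D_A)$ is again such a nested face and $\tilde{(\tilde D_A)}_A=D$, I would write \eqref{eqStar} once more for $(A,\tilde D_A)$, solve that equation for its $F=A$ term $\mathrm m(A,\tilde D_A)$, substitute the result into \eqref{eqStar} for $(A,D)$, and simplify via $\mathrm v(D,\tilde D_A)\,\mathrm v(\tilde D_A,D)=\mathrm b_A$: the two occurrences of $\mathrm m(A,D)$ coalesce into the single coefficient $1-\mathrm b_A$, giving
\begin{equation}\label{eqRec}
(1-\mathrm b_A)\,\mathrm m(A,D)=-\!\!\!\sum_{\substack{F\in F_{\mathcal A}^{(A,D)}\\ F\neq A}}\!\!\!(-1)^{\mathrm{rk}\,F-\mathrm{rk}\,A}\mathrm m(F,D)\;-\;(-1)^{\mathrm{rk}\,D-\mathrm{rk}\,A}\,\mathrm v(D,\tilde D_A)\!\!\!\sum_{\substack{F\in F_{\mathcal A}^{(A,\tilde D_A)}\\ F\neq A}}\!\!\!(-1)^{\mathrm{rk}\,F-\mathrm{rk}\,A}\mathrm m(F,\tilde D_A).
\end{equation}

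Now I would induct on $\ell:=\mathrm{rk}\,D-\mathrm{rk}\,A\geq 1$. As $A\prec D$ with $D$ a chamber, $A\notin C_{\mathcal A}$, so $\mathcal A_A\neq\emptyset$, $\mathrm b_A$ is a nonconstant monomial, and $1-\mathrm b_A$ is a unit of $B_{\mathcal A}$. For $\ell=1$ the face $A$ is a facet of $D$, so $F_{\mathcal A}^{(A,D)}=\{A,D\}$ and $F_{\mathcal A}^{(A,\tilde D_A)}=\{A,\tilde D_A\}$ (no face has rank strictly between $\mathrm{rk}\,A$ and $\mathrm{rk}\,A+1$), and \eqref{eqRec} collapses to $(1-\mathrm b_A)\,\mathrm m(A,D)=\gamma_{\mathcal A}(D)-\mathrm v(D,\tilde D_A)\,\gamma_{\mathcal A}(\tilde D_A)$, so $\mathrm m(A,D)\in\sum_{C}B_{\mathcal A}\,\gamma_{\mathcal A}(C)$. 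For $\ell\geq 2$, every $F$ occurring in \eqref{eqRec} has $\mathrm{rk}\,F>\mathrm{rk}\,A$; if $F\notin\{D,\tilde D_A\}$ then $(F,D)$, resp.\ $(F,\tilde D_A)$, is a nested face of strictly smaller length, so by the induction hypothesis $\mathrm m(F,D)$, resp.\ $\mathrm m(F,\tilde D_A)$, lies in $\sum_C B_{\mathcal A}\,\gamma_{\mathcal A}(C)$, whereas for $F=D$, resp.\ $F=\tilde D_A$, the term equals $\gamma_{\mathcal A}(D)$, resp.\ $\gamma_{\mathcal A}(\tilde D_A)$. Hence the right-hand side of \eqref{eqRec} lies in $\sum_C B_{\mathcal A}\,\gamma_{\mathcal A}(C)$, and dividing by the unit $1-\mathrm b_A$ completes the induction.

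The substantive points are the derivation of \eqref{eqStar}—where it is crucial that $\mathrm m(D,D)$ is genuinely $\gamma_{\mathcal A}(D)$ (because $DC=D$ for all $C$) and not a dual basis vector, as this is precisely what makes the recursion terminate inside $\sum_C B_{\mathcal A}\,\gamma_{\mathcal A}(C)$—and the sign/monomial bookkeeping that produces the clean factor $1-\mathrm b_A$ in \eqref{eqRec}, which rests on $\mathrm v(D,\tilde D_A)\,\mathrm v(\tilde D_A,D)=\mathrm b_A$. The remaining care is to justify that for $\ell=1$ the interval $F_{\mathcal A}^{(A,D)}$ is exactly $\{A,D\}$; after that the induction is automatic.
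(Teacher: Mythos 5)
Your proof is correct and follows essentially the same route as the paper: specialise Proposition~\ref{Witt} at $x_C=\mathrm v(D,C)\,C^*$, use Lemma~\ref{CFD} to factor the right-hand side through $\mathrm v(D,\tilde D_A)$, pair the resulting identity with its $(A,\tilde D_A)$-counterpart to isolate $(1-\mathrm b_A)\,\mathrm m(A,D)$, and induct downward on the rank gap. Your version is in fact slightly more explicit than the paper's (e.g.\ in verifying $\mathrm v(D,\tilde D_A)\,\mathrm v(\tilde D_A,D)=\mathrm b_A$ and in spelling out the base case), but the argument is the same.
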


\begin{proof}
The proof is inspired from the backward induction in the proof of \cite[Proposition~8.13]{AgMa}. We obviously have $\mathrm{m}(D,D) = \gamma_{\mathcal{A}}(D)$. Then, Proposition~\ref{Witt} applied to $x_C = \mathrm{v}(D,C)\, C^*$ in addition to Lemma \ref{CFD} yield
$$\sum_{F \in F_{\mathcal{A}}^{(A,D)}} (-1)^{\mathrm{rk}\,F} \mathrm{m}(F,D) = (-1)^{\mathrm{rk}\,D} \sum_{\substack{C \in C_{\mathcal{A}} \\ AC = \tilde{D}_A}} \mathrm{v}(D,C)\, C^* = (-1)^{\mathrm{rk}\,D} \, \mathrm{v}(D,\tilde{D}_A) \, \mathrm{m}(A,\tilde{D}_A).$$
Hence, $\displaystyle \mathrm{m}(A,D) - (-1)^{\mathrm{rk}\,D - \mathrm{rk}\,A} \, \mathrm{v}(D,\tilde{D}_A) \, \mathrm{m}(A,\tilde{D}_A) = \sum_{F \in F_{\mathcal{A}}^{(A,D)} \setminus \{A\}} (-1)^{\mathrm{rk}\,F - \mathrm{rk}\,A +1} \mathrm{m}(F,D)$.
By induction hypothesis, for every $C \in C_{\mathcal{A}}$, there exists $a_C \in B_{\mathcal{A}}$, such that $$\sum_{F \in F_{\mathcal{A}}^{(A,D)} \setminus \{A\}} (-1)^{\mathrm{rk}\,F - \mathrm{rk}\,A +1} \mathrm{m}(F,D) = \sum_{C \in C_{\mathcal{A}}} a_C \, \gamma_{\mathcal{A}}(C).$$
Since $A \preceq \tilde{D}_A$ and $(\widetilde{\tilde{D}_A})_A = D$, replacing $D$ with $A\tilde{D}$, there exists also $e_C \in B_{\mathcal{A}}$ for every $C \in C_{\mathcal{A}}$ such that $\displaystyle \mathrm{m}(A,\tilde{D}_A) - (-1)^{\mathrm{rk}\,\tilde{D}_A - \mathrm{rk}\,A} \, v(\tilde{D}_A,D) \, \mathrm{m}(A,D) = \sum_{C \in C_{\mathcal{A}}} e_C \, \gamma_{\mathcal{A}}(C)$. Therefore, $$\displaystyle \mathrm{m}(A,D) = \sum_{C \in C_{\mathcal{A}}} \frac{a_C + (-1)^{\mathrm{rk}\,D - \mathrm{rk}\,A} \, \mathrm{v}(D,\tilde{D}_A) \, e_C}{1 - \mathrm{b}_A} \gamma_{\mathcal{A}}(C).$$
\end{proof}

\noindent A bounded chamber $C$ of a dehyperplane arrangement $\mathcal{A}$ is locally bounded in $K$ if $\partial C \varsubsetneq K$. Denote by $\breve{C}_{\mathcal{A}}^K$ the set formed by the chambers of $\mathcal{A}$ which are locally bounded in $K$.

\begin{definition}
	Let $\mathcal{A}$ be a dehyperplane arrangement in $T \in \mathscr{R}_n$, $K \in K_{\mathcal{A}}$, and $D$ a chamber in $C_{\mathcal{A}}^K \setminus \breve{C}_{\mathcal{A}}^K$. The \textbf{local boundering} of $D$ to a locally bounded chamber $D'$ in $K$ consists on inserting a minimal number of dehyperplanes $H_1, \dots, H_k$ in $\mathcal{A}$ such that
	\begin{itemize}
		\item $\mathcal{A}' = \mathcal{A} \sqcup \{H_i\}_{i \in [k]}$ is a dehyperplane arrangement in $T$, and obviously $K \in K_{\mathcal{A}'}$,
		\item $H_i$ divides $D$ into two chambers $D_i, D_i'$ such that $D_i \varsubsetneq H_i^+$ and $\chi(\overline{D_i'} \cap K)=0$,
		\item if $\displaystyle L = \bigcap_{i \in [k]}H_i^+$ and $C_{\mathcal{A}'}^K(L) := \{C \cap L\ |\ C \in C_{\mathcal{A}}^K\}$, there is a bijection $\mathrm{g}_K: C_{\mathcal{A}'}^K(L) \rightarrow C_{\mathcal{A}}^K$ such that, for every $C \in C_{\mathcal{A}'}^K(L)$, we have $\epsilon_{\mathcal{A}}(C) = \epsilon_{\mathcal{A}}\big(\mathrm{g}_K(C)\big)$,
		\item $\displaystyle D' = \bigcap_{i \in [k]} D_i$ is the chamber of $\mathcal{A}'$ such that $\epsilon_{\mathcal{A}}(D') = \epsilon_{\mathcal{A}}(D)$ and $\breve{C}_{\mathcal{A}'}^K = \breve{C}_{\mathcal{A}}^K \sqcup \{D'\}$.
	\end{itemize}
\end{definition}

\begin{example}
	Consider the dehyperplane arrangement $\mathcal{A}_{ex}$ of Figure~\ref{Ex1}. In Figure~\ref{Ex2}, we have a boundering of the chamber $-+++$ with the dehyperplanes $Q_1, Q_2$, and a boundering of the chamber $++++$ with the dehyperplanes $Q_1, Q_2, Q_3, Q_4$.
	
	\begin{figure}[h]
		\centering
		\includegraphics[scale=0.73]{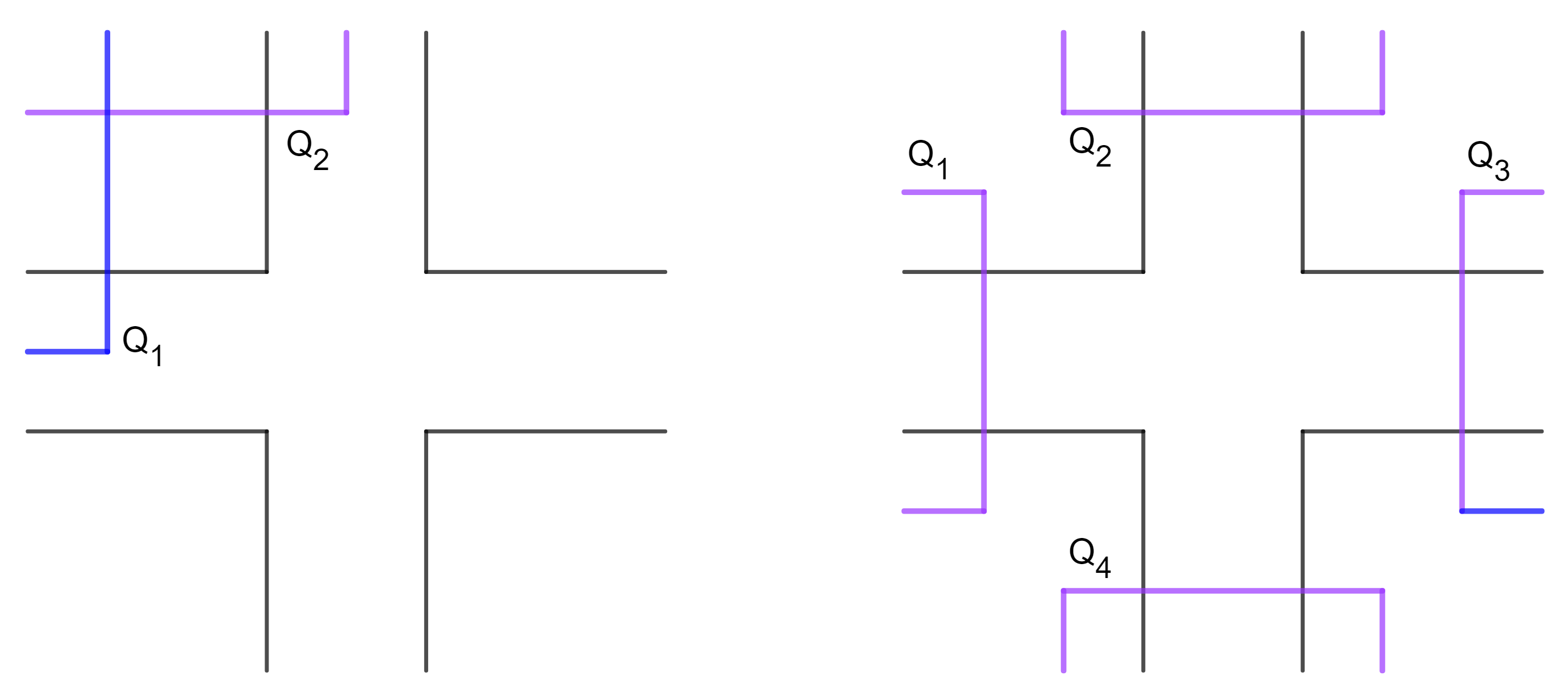}
		\caption{Boundering of the Chambers $-+++$ and $++++$ of $\mathcal{A}_{ex}$}
		\label{Ex2}
	\end{figure}
\end{example}

\noindent Let $B_{\mathcal{A}}^K$ be the subring $\displaystyle \Bigg\{\frac{p}{\displaystyle \prod_{F \in F_{\mathcal{A}}^K \setminus C_{\mathcal{A}}^K} (1 - \mathrm{b}_F)^{k_F}}\ \Bigg|\ p \in R_{\mathcal{A}},\, k_F \in \mathbb{N}\Bigg\}$ of $B_{\mathcal{A}}$, and $M_{\mathcal{A}}^K$ the submodule $\displaystyle \Big\{\sum_{C \in C_{\mathcal{A}}^K} x_C C\ \Big|\ x_C \in R_{\mathcal{A}}\Big\}$ of $M_{\mathcal{A}}$ for an apartment $K$ of a dehyperplane arrangement $\mathcal{A}$. Moreover, define the linear map $\gamma_{\mathcal{A}}^K: M_{\mathcal{A}} \rightarrow M_{\mathcal{A}}^*$, for $D \in C_{\mathcal{A}}^K$, by $\displaystyle \gamma_{\mathcal{A}}^K(D) := \sum_{C \in C_{\mathcal{A}}^K} \mathrm{v}(D,C)\, C^*$, and for a nested face $(A,D)$ with $A \in F_{\mathcal{A}}^K$, let $\displaystyle \mathrm{m}_K(A,D) := \sum_{\substack{C \in C_{\mathcal{A}}^K \\ AC = D}} \mathrm{v}(D,C)\, C^* \in M_{\mathcal{A}}^*$.

\begin{theorem} \label{ThDK*}
Let $\mathcal{A}$ be a dehyperplane arrangement in $T \in \mathscr{R}_n$, $K \in K_{\mathcal{A}}$, and $D \in C_{\mathcal{A}}^K$. Then,
$$D^* = \sum_{C \in C_{\mathcal{A}}^K} x_C \, \gamma_{\mathcal{A}}^K(C) \quad \text{with} \quad x_C \in B_{\mathcal{A}}^K.$$
\end{theorem}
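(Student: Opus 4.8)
The plan is to express the dual basis vector $D^*$ as a $B_{\mathcal{A}}^K$-linear combination of the vectors $\gamma_{\mathcal{A}}^K(C)$, mirroring the strategy used for Proposition~\ref{mAD} but working inside the apartment $K$. Since $\gamma_{\mathcal{A}}^K(C) = \sum_{C' \in C_{\mathcal{A}}^K} \mathrm{v}(C,C')\,C'^*$, the statement is equivalent to saying that the matrix $\big(\mathrm{v}(C,C')\big)_{C,C' \in C_{\mathcal{A}}^K} = {}^tV_{\mathcal{A}}^K$ is invertible over $B_{\mathcal{A}}^K$; indeed once Theorem~\ref{ThDK*} is known for all $D \in C_{\mathcal{A}}^K$, the change-of-basis matrix expressing $\{D^*\}$ in terms of $\{\gamma_{\mathcal{A}}^K(C)\}$ is exactly the inverse, and its existence over $B_{\mathcal{A}}^K$ is what feeds into the computation of $\det V_{\mathcal{A}}^K$. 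So the real content is an explicit inversion procedure that only introduces denominators of the form $\prod_{F \in F_{\mathcal{A}}^K \setminus C_{\mathcal{A}}^K}(1-\mathrm{b}_F)^{k_F}$.

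**First I would** handle the case where $D$ is locally bounded in $K$, i.e.\ $D \in \breve{C}_{\mathcal{A}}^K$. Here the natural tool is the analogue of Proposition~\ref{PrD} restricted to $K$: for such a $D$, one has $\overline{D} \subseteq K$ and the closure $\overline{D}$ together with its proper faces behaves topologically like a ball, so $\sum_{F \in F_{\overline{D}}}(-1)^{\mathrm{rk}\,F}\sum_{C \in C_{\mathcal{A}}^K,\,FC=D}x_C = (-1)^{c_{\mathcal{A}}}x_D$ for arbitrary coefficients $x_C$. Applying this identity with $x_C = \mathrm{v}(D,C)\,C^*$ and using Lemma~\ref{CFD} to factor $\mathrm{v}(D,C) = \mathrm{v}(D,FC)\,\mathrm{v}(FC,C)$ turns the inner sums into terms $\mathrm{v}(D,FD)\,\mathrm{m}_K(F,FD)$. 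Each $\mathrm{m}_K(F,FD)$ with $F \neq D$ involves a strictly lower-dimensional centralization $\mathcal{A}_F^K$, so by backward induction on $\dim F$ (the base being $F$ a chamber, where $\mathrm{m}_K(F,F) = \gamma_{\mathcal{A}}^K(F)$, essentially Proposition~\ref{mAD} read inside $K$), each is already a $B_{\mathcal{A}}^K$-combination of the $\gamma_{\mathcal{A}}^K(C)$; solving for $x_D = D^*$ introduces only a single extra factor $(1-\mathrm{b}_F)^{-1}$ coming from the $\mathrm{m}(A,D)$-versus-$\mathrm{m}(A,\tilde D_A)$ pairing in Proposition~\ref{mAD}, which lies in $F_{\mathcal{A}}^K \setminus C_{\mathcal{A}}^K$. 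This stays inside $B_{\mathcal{A}}^K$ rather than merely $B_{\mathcal{A}}$ precisely because every face $F$ arising is contained in $K$.

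**Next I would** reduce a general $D \in C_{\mathcal{A}}^K \setminus \breve{C}_{\mathcal{A}}^K$ to the bounded case by the local boundering construction: insert the minimal family $H_1,\dots,H_k$ producing $\mathcal{A}' = \mathcal{A} \sqcup \{H_i\}$, so that $D' = \bigcap_i D_i \in \breve{C}_{\mathcal{A}'}^K$ and the bijection $\mathrm{g}_K: C_{\mathcal{A}'}^K(L) \to C_{\mathcal{A}}^K$ matches sign sequences. Apply the already-proven bounded case inside $\mathcal{A}'$ to write $D'^* = \sum_{C' \in C_{\mathcal{A}'}^K} y_{C'}\,\gamma_{\mathcal{A}'}^K(C')$ over $B_{\mathcal{A}'}^K$, then transport this identity back along $\mathrm{g}_K$. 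The key point making this legitimate is that the new dehyperplanes $H_i$ were chosen so the chambers $D_i'$ they cut off satisfy $\chi(\overline{D_i'} \cap K) = 0$, which forces the coefficients $q_{H_i}^\pm$ to cancel out of the final formula — i.e.\ the contributions of the faces lying on the $H_i$ telescope away by the same Euler-characteristic vanishing used in Proposition~\ref{PrD}, so no denominators $1 - \mathrm{b}_F$ with $F$ involving an $H_i$ survive, and the result descends to a genuine $B_{\mathcal{A}}^K$-combination of the $\gamma_{\mathcal{A}}^K(C)$.

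**The main obstacle** I anticipate is the last reduction — verifying that the boundering bookkeeping is consistent, i.e.\ that the extra variables $q_{H_i}^{\varepsilon}$ introduced by the auxiliary dehyperplanes genuinely cancel and that the denominators produced over $B_{\mathcal{A}'}^K$ collapse to denominators over $B_{\mathcal{A}}^K$. This requires carefully tracking which faces of $\mathcal{A}'$ contribute to the inductive formula and showing, via the condition $\chi(\overline{D_i'}\cap K)=0$ in the definition of local boundering, that the "fictitious" faces contribute zero net coefficient; the signs $(-1)^{\mathrm{rk}}$ and the telescoping across the $H_i$ must be handled simultaneously. A secondary, more routine difficulty is setting up the backward induction cleanly so that the recursion in Proposition~\ref{mAD}, applied face by face inside $K$, produces exactly the denominator set $\{1-\mathrm{b}_F : F \in F_{\mathcal{A}}^K \setminus C_{\mathcal{A}}^K\}$ and no more — which amounts to checking that every face $F$ and every $\tilde D_A$ entering the recursion genuinely lies in $F_{\mathcal{A}}^K$.
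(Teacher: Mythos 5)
Your proposal follows essentially the same route as the paper: the locally bounded case via Proposition~\ref{PrD} applied to $x_C=\mathrm{v}(D,C)\,C^*$ combined with Proposition~\ref{mAD}, and the general case via local boundering to $\mathcal{A}'$, transport along $\mathrm{g}_K$, and cancellation of the auxiliary variables $q_{H_i}^{\pm}$ (which the paper implements by specializing $q_{H_i}^+=q_{H_i}^-=0$ and comparing the spans of the two resulting sums). The only cosmetic difference is that the paper first specializes $q_H^{\pm}=0$ on the dehyperplanes bounding $K$ to identify $\gamma_{\mathcal{A}}$ with $\gamma_{\mathcal{A}}^K$ and $\mathrm{m}$ with $\mathrm{m}_K$, where you instead invoke ``restricted to $K$'' analogues of the same propositions.
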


\begin{proof}
Let $\mathcal{K}$ be the subset containing the dehyperplanes $H \in \mathcal{A}$ such that $\dim H \cap \overline{K} = n-1$. Setting $q_H^+ = q_H^- = 0$ for every $H \in \mathcal{K}$, we get $\mathrm{v}(D,C) = 0$ whenever one of $C$ or $D$ is a chamber in $C_{\mathcal{A}}^K$ but the other not. Then if $D \in C_{\mathcal{A}}^K$, $$\displaystyle \gamma_{\mathcal{A}}(D) = \sum_{C \in C_{\mathcal{A}}^K} \mathrm{v}(D,C)\, C^* = \gamma_{\mathcal{A}}^K(D) \quad \text{and} \quad \mathrm{m}(A,D) = \sum_{\substack{C \in C_{\mathcal{A}}^K \\ AC = D}} \mathrm{v}(D,C)\, C^* = \mathrm{m}_K(A,D).$$

\noindent Suppose that $D \in \breve{C}_{\mathcal{A}}^K$. Applying $x_C = \mathrm{v}(D,C)\, C^*$ to Proposition~\ref{PrD}, we obtain $$\sum_{F \in F_{\overline{D}}} (-1)^{\mathrm{rk}\,F} \mathrm{m}_K(F,D) = (-1)^{c_{\mathcal{A}}} D^*.$$
From Proposition \ref{mAD}, we conclude that $\displaystyle D^* = \sum_{C \in C_{\mathcal{A}}} x_C \, \gamma_{\mathcal{A}}^K(C)$ with $x_C \in B_{\mathcal{A}}^K$.
	
\noindent Suppose now that $D \in C_{\mathcal{A}}^K \setminus \breve{C}_{\mathcal{A}}^K$. Consider the dehyperplane arrangement $\mathcal{A}' = \mathcal{A} \sqcup \{H_i\}_{i \in [k]}$ obtained from the local boundering of $D$ to $D'$, the apartment $\displaystyle L = \bigcap_{i \in [k]}H_i^+$, and the bijection $\mathrm{g}_K: C_{\mathcal{A}'}^K(L) \rightarrow C_{\mathcal{A}}^K$ with $\epsilon_{\mathcal{A}}(C) = \epsilon_{\mathcal{A}}\big(\mathrm{g}_K(C)\big)$. As $D' \in \breve{C}_{\mathcal{A}'}^K$, then $\displaystyle {D'}^* = \sum_{C \in C_{\mathcal{A}'}^K} x_C \, \gamma_{\mathcal{A}'}^K(C)$ where $x_C \in B_{\mathcal{A}'}^K$. Hence,
\begin{align*}
& {D'}^* = \sum_{C \in C_{\mathcal{A}'}^K(L)} x_C \, \gamma_{\mathcal{A}'}^K(C) + \sum_{C' \in C_{\mathcal{A}'}^K \setminus C_{\mathcal{A}'}^K(L)} x_{C'} \, \gamma_{\mathcal{A}'}^K(C') \\
& {D'}^* - \sum_{C \in C_{\mathcal{A}'}^K(L)} x_C \, \gamma_{\mathcal{A}'}^K(C) = \sum_{C' \in C_{\mathcal{A}'}^K \setminus C_{\mathcal{A}'}^K(L)} x_{C'} \, \gamma_{\mathcal{A}'}^K(C').
\end{align*}
Setting $q_{H_i}^+ = q_{H_i}^- = 0$ for $i \in [k]$, we get on one side
$$\displaystyle {D'}^* - \sum_{C \in C_{\mathcal{A}'}^K(L)} x_C \, \gamma_{\mathcal{A}'}^K(C)\, \in \, \bigg\{\sum_{C \in C_{\mathcal{A}'}^K(L)} x_{C^*} C^*\ \bigg|\ x_{C^*} \in B_{\mathcal{A}}^K\bigg\},$$ and $\displaystyle \sum_{C' \in C_{\mathcal{A}'}^K \setminus C_{\mathcal{A}'}^K(L)} x_{C'} \, \gamma_{\mathcal{A}'}^K(C')\, \in \, \bigg\{\sum_{C' \in C_{\mathcal{A}'}^K \setminus C_{\mathcal{A}'}^K(L)} x_{{C'}^*} {C'}^*\ \bigg|\ x_{{C'}^*} \in B_{\mathcal{A}}^K\bigg\}$ on the other side. The only possibility is
$\displaystyle {D'}^* - \sum_{C \in C_{\mathcal{A}'}^K(L)} x_C \, \gamma_{\mathcal{A}'}^K(C)\ =\ \sum_{C' \in C_{\mathcal{A}'}^K \setminus C_{\mathcal{A}'}^K(L)} x_{C'} \, \gamma_{\mathcal{A}'}^K(C')\ =\ 0$. Finally, replacing $C$ by $\mathrm{g}_K(C)$ for every $C \in C_{\mathcal{A}'}^K(L)$, we conclude that $\displaystyle D^* = \sum_{C \in C_{\mathcal{A}}^K} x_C \, \gamma_{\mathcal{A}}^K(C)$.
\end{proof}

\begin{proposition} \label{PrDet}
Let $\mathcal{A}$ a dehyperplane arrangement in $T \in \mathscr{R}_n$, and $K \in K_{\mathcal{A}}$. To every face $F \in F_{\mathcal{A}}^K \setminus C_{\mathcal{A}}^K$ can be associated a nonnegative integer $l_F$ such that $$\det V_{\mathcal{A}}^K = \prod_{F \in F_{\mathcal{A}}^K \setminus C_{\mathcal{A}}^K} (1 - \mathrm{b}_F)^{l_F}.$$ 
\end{proposition}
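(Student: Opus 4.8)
The plan is to deduce everything from Theorem~\ref{ThDK*}. Writing $\gamma_{\mathcal{A}}^K(C) = \sum_{E \in C_{\mathcal{A}}^K}\mathrm{v}(C,E)\,E^*$, I observe that, up to transposition, $V_{\mathcal{A}}^K$ is precisely the matrix of the restriction of $\gamma_{\mathcal{A}}^K$ to the submodule spanned by $C_{\mathcal{A}}^K$, expressed in the basis $\{C\}_{C \in C_{\mathcal{A}}^K}$ and its dual. For each $D \in C_{\mathcal{A}}^K$, Theorem~\ref{ThDK*} provides coefficients $x^{(D)}_C \in B_{\mathcal{A}}^K$ with $D^* = \sum_{C \in C_{\mathcal{A}}^K} x^{(D)}_C\,\gamma_{\mathcal{A}}^K(C)$; comparing, for each $E \in C_{\mathcal{A}}^K$, the coefficient of $E^*$ on both sides yields $\delta_{D,E} = \sum_{C \in C_{\mathcal{A}}^K} x^{(D)}_C\,\mathrm{v}(C,E)$, that is, $X\,W = I$ where $W := \big(\mathrm{v}(C,E)\big)_{C,E \in C_{\mathcal{A}}^K}$ is the transpose of $V_{\mathcal{A}}^K$ and $X := \big(x^{(D)}_C\big)$ has entries in $B_{\mathcal{A}}^K$. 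Since $W$ and $X$ are square, $W$ is invertible over $B_{\mathcal{A}}^K$, and therefore $\det V_{\mathcal{A}}^K = \det W \in (B_{\mathcal{A}}^K)^{\times}$.

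Next I would pin down the units of $B_{\mathcal{A}}^K$. Since $R_{\mathcal{A}} = \mathbb{Z}[q_H^{\varepsilon}]$ is a UFD and $B_{\mathcal{A}}^K$ is its localization at the multiplicative set generated by the finitely many elements $1-\mathrm{b}_F$ with $F \in F_{\mathcal{A}}^K\setminus C_{\mathcal{A}}^K$, the ring $B_{\mathcal{A}}^K$ is again a UFD, and its units are exactly $\pm 1$ times the (possibly negative) integer powers of the irreducible divisors of the $1-\mathrm{b}_F$. The crucial observation is that $1-\mathrm{b}_F$ is \emph{itself} irreducible in $R_{\mathcal{A}}$: a face $F$ that is not a chamber has $\mathcal{A}_F \neq \emptyset$, so $\mathrm{b}_F = \prod_{H\in\mathcal{A}_F} q^+_H q^-_H$ is a nonconstant squarefree monomial; choosing one of its variables $q$ and writing $\mathrm{b}_F = q\,m$ with $m$ a monomial in the remaining variables, the polynomial $1-\mathrm{b}_F = -m\,q + 1$ is primitive and of degree $1$ in $q$ over the UFD $R_{\mathcal{A}}$ with $q$ deleted, hence irreducible by Gauss's lemma; and two of these polynomials are associates only when the corresponding sets $\mathcal{A}_F$ coincide.

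Putting the two steps together, $\det V_{\mathcal{A}}^K$ is a unit of $B_{\mathcal{A}}^K$ which also lies in $R_{\mathcal{A}}$, hence is of the form $\pm\prod_s s^{e_s}$ with $s$ ranging over the pairwise non-associate irreducibles occurring among the $1-\mathrm{b}_F$ and $e_s \in \mathbb{Z}$; membership in $R_{\mathcal{A}}$ forces every $e_s \ge 0$. Distributing each $e_s$ among the faces $F$ with $1-\mathrm{b}_F = s$ — say by putting all of it on one such face and $0$ on the rest — produces nonnegative integers $l_F$ with $\det V_{\mathcal{A}}^K = \pm\prod_{F\in F_{\mathcal{A}}^K\setminus C_{\mathcal{A}}^K}(1-\mathrm{b}_F)^{l_F}$. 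The sign is then fixed by specialisation: setting all $q^{\varepsilon}_H$ equal to $0$ turns $V_{\mathcal{A}}^K$ into the identity matrix (since $\mathrm{v}(C,C)=1$ while $\mathrm{v}(C,D)$ is a product of the $q^{\varepsilon}_H$ for $C \neq D$) and each $1-\mathrm{b}_F$ into $1$, so both sides evaluate to $1$ and the sign is $+1$.

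The real content here is Theorem~\ref{ThDK*}; granting it, the only delicate point is the bookkeeping in the first step — checking that the identities of Theorem~\ref{ThDK*}, which a priori live in $M_{\mathcal{A}}^*$ over the extension $B_{\mathcal{A}}^K$, genuinely amount to invertibility of the $\#C_{\mathcal{A}}^K \times \#C_{\mathcal{A}}^K$ matrix $V_{\mathcal{A}}^K$ itself, and not merely of some larger matrix attached to the ambient arrangement. The irreducibility of $1-\mathrm{b}_F$ and the unique-factorization argument are then routine, and I do not expect a serious obstacle beyond that bookkeeping.
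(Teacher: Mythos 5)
Your proposal is correct and follows essentially the same route as the paper: both deduce from Theorem~\ref{ThDK*} that $(V_{\mathcal{A}}^K)^{-1}$ has all entries in $B_{\mathcal{A}}^K$, hence that $\det V_{\mathcal{A}}^K$ is a unit of $B_{\mathcal{A}}^K$ lying in $R_{\mathcal{A}}$, and then fix the leading constant by noting that the constant term (equivalently, the specialisation at $q_H^{\varepsilon}=0$) is $1$. Your unique-factorization argument (irreducibility of $1-\mathrm{b}_F$ via Gauss's lemma, and the description of the units of the localization) simply makes explicit the step the paper leaves implicit when it asserts that the determinant ``has then the form $k\prod_F(1-\mathrm{b}_F)^{l_F}$ with $k\in\mathbb{Z}$.''
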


\begin{proof}
We extend the proof of \cite[Proposition~5.4]{Ra1} to apartment of dehyperplane arrangements, namely, we first note that $V_{\mathcal{A}}^K$ is the matrix representation of $\gamma_{\mathcal{A}}^K$. Then, we know from Theorem~\ref{ThDK*} that $\displaystyle {(\gamma_{\mathcal{A}}^K)}^{-1}(D^*) = \sum_{C \in C_{\mathcal{A}}^K} x_C \, C$ with $x_C \in B_{\mathcal{A}}^K$ for $D \in C_{\mathcal{A}}^K$. Hence, each entry of ${(V_{\mathcal{A}}^K)}^{-1}$ is an element of $B_{\mathcal{A}}^K$. Since $\displaystyle {(V_{\mathcal{A}}^K)}^{-1} = \frac{\mathrm{adj}(V_{\mathcal{A}}^K)}{\det V_{\mathcal{A}}^K}$, the determinant of $V_{\mathcal{A}}^K$ has then the form $\displaystyle k \prod_{F \in F_{\mathcal{A}}^K \setminus C_{\mathcal{A}}^K} (1 - \mathrm{b}_F)^{l_F}$ with $k \in \mathbb{Z}$. As the constant term of $\det V_{\mathcal{A}}^K$ is $1$, we deduce that $k=1$.
\end{proof}

\noindent For $F \in F_\mathcal{A}$ and $H \in \mathcal{A}_F$, define the integer $\displaystyle \beta_F^H := \frac{\#\{C \in C_{\mathcal{A}}\ |\ \overline{C} \cap H = F\}}{2}$. The following proof not only proves Theorem~\ref{ThDet}, but also shows that, for any hyperplane in $\mathcal{A}_F$, $\beta_F^H$ is the same, which justifies the definition of the multiplicity.

\begin{proof}
From Proposition~\ref{PrDet}, we have $\displaystyle \det V_{\mathcal{A}}^K = \prod_{F \in F_{\mathcal{A}}^K \setminus C_{\mathcal{A}}^K} (1 - \mathrm{b}_F)^{l_F}$. Take a face $E \in F_{\mathcal{A}}^K \setminus C_{\mathcal{A}}^K$: there exists an apartment $L \in K_{\mathcal{A}}$ such that $E \subseteq L \subseteq K$, $\displaystyle \bigcap_{\substack{H \in \mathcal{A} \\ H \cap L \neq \emptyset}}H = E$, and
$$\det V_{\mathcal{A}}^L = \prod_{F \in F_{\mathcal{A}}^L \setminus C_{\mathcal{A}}^L} (1 - \mathrm{b}_F)^{l_F'}.$$ Setting $h_H^+ = h_H^- = 0$ for every $H \in \mathcal{A} \setminus \mathcal{A}_E$, we see that, for every $F \in F_{\mathcal{A}}^L \setminus C_{\mathcal{A}}^L$, $l_F = l_F'$. 

\noindent We prove by backward induction on the dimension of $E$ that $$\forall H,H' \in \mathcal{A}_E:\ \beta_E^H = \beta_E^{H'} = \beta_E \quad \text{and} \quad \det V_{\mathcal{A}}^L = \prod_{F \in F_{\mathcal{A}}^L \setminus C_{\mathcal{A}}^L} (1 - \mathrm{b}_F)^{\beta_F}.$$

\noindent Remark that $\displaystyle \beta_F^H = \frac{\#\{C \in C_{\mathcal{A}}^L\ |\ \overline{C} \cap H = F\}}{2}$. It is clear that, if $\dim E = n-1$, then $\beta_E = 1$ and $\det V_{\mathcal{A}}^L = 1 - \mathrm{b}_E$. If $\dim E < n-1$, by induction hypothesis, $$\det V_{\mathcal{A}}^L = (1 - \mathrm{b}_E)^{l_E} \, \prod_{F \, \in \, (F_{\mathcal{A}}^L \setminus C_{\mathcal{A}}^L) \setminus \{E\}} (1 - \mathrm{b}_F)^{\beta_F}.$$
Note that the leading monomial in $\det V_{\mathcal{A}}^L$ is $\displaystyle (-1)^{\frac{\#C_{\mathcal{A}}^L}{2}} \prod_{C \in C_{\mathcal{A}}^L} \mathrm{v}(C,\tilde{C}_E) \, = \, \big( -\prod_{H \in \mathcal{A}_E} h_H^+ \, h_H^- \big)^{\frac{\#C_{\mathcal{A}}^L}{2}}$.
Comparing the exponent of $h_H^+ \, h_H^-$, we get $\displaystyle l_E \, = \, \frac{\#C_{\mathcal{A}}^L}{2} - \sum_{\substack{F \, \in \, (F_{\mathcal{A}}^L \setminus C_{\mathcal{A}}^L) \setminus \{E\} \\ F \subseteq H}} \beta_F^H \, = \, \beta_E^H$.
\end{proof}

\section{Proof of Theorem~\ref{ThSy}} \label{SeSy}

\noindent We determine the solution space dimension of the Aguiar-Mahajan system, and solve that latter for central dehyperplane arrangements.

\noindent The Varchenko matrix of $\mathcal{A}^X$ is $V_{\mathcal{A}^X} :=\big|\mathrm{v}(D,C)\big|_{C,D \in C_{\mathcal{A}^X}}$. The centralization to a face $F \in F_{\mathcal{A}^X} \setminus C_{\mathcal{A}^X}$ is the dehyperplane arrangement $\mathcal{A}_F^X := \{H \in \mathcal{A}^X\ |\ F \subseteq H\}$ in $X$. The weight and multiplicity of $F$ in $X$ are respectively the monomial and integer
$$\mathrm{b}_F^X := \prod_{\substack{H \in \mathcal{A} \\ H \cap X \in \mathcal{A}_F^X}} q_H^+ q_H^- \quad \text{and} \quad \beta_F^X := \frac{\#\{C \in C_{\mathcal{A}^X}\ |\ \overline{C} \cap P = F\}}{2},$$
where $P \in \mathcal{A}_F^X$, and $\beta_F^X$ is independent of $P$ like the multiplicity of a dehyperplane in $\mathcal{A}$.

\begin{corollary} \label{CoVX}
Let $\mathcal{A}$ be a dehyperplane arrangement in $T \in \mathscr{R}_n$, and $X \in L_{\mathcal{A}}$. Then, $$\det V_{\mathcal{A}^X} = \prod_{F \in F_{\mathcal{A}^X} \setminus C_{\mathcal{A}^X}} (1 - \mathrm{b}_F^X)^{\beta_F^X}.$$
\end{corollary}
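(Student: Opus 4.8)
The plan is to recognise $V_{\mathcal{A}^X}$ as an instance of the Varchenko matrix covered by Corollary~\ref{CoTh}. By \cite[Lemma~2.2]{Ra3}, the restriction $\mathcal{A}^X$ is itself a dehyperplane arrangement in the manifold $X \in \mathscr{R}_{\dim X}$. Hence Corollary~\ref{CoTh} applies verbatim to $\mathcal{A}^X$ in its ambient manifold $X$, and reading it off gives $\det V_{\mathcal{A}^X} = \prod (1 - \mathrm{b}_F)^{\beta_F}$, the product ranging over the faces of $\mathcal{A}^X$ that are not chambers, where $\mathrm{b}_F$ and $\beta_F$ are the weight and multiplicity of $F$ computed intrinsically inside $X$. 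All that remains is to check that this intrinsic data coincides with the decorated data $F_{\mathcal{A}^X}$, $C_{\mathcal{A}^X}$, $V_{\mathcal{A}^X}$, $\mathrm{b}_F^X$, $\beta_F^X$ introduced above.

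To carry this out I would set up the sign-sequence preserving bijection between the faces of $\mathcal{A}^X$, viewed as an arrangement in $X$, and the set $F_{\mathcal{A}^X} = \{F \in F_{\mathcal{A}}\ |\ \mathrm{s}(F) \subseteq X\}$: a face $G \subseteq X$ of $\mathcal{A}^X$ is sent to the face of $\mathcal{A}$ carrying the same sign on each dehyperplane $H \in \mathcal{A}$ with $H \cap X \in \mathcal{A}^X$ (the signs on the remaining dehyperplanes of $\mathcal{A}$ being forced, since $G \subseteq X$). Under the identification $q_{H\cap X}^{\varepsilon} = q_H^{\varepsilon}$ induced by $(H \cap X)^{\varepsilon} = H^{\varepsilon} \cap X$, the separating set of two chambers of $\mathcal{A}^X$ is exactly $\mathscr{H}(C,D)$, so the intrinsic distance function of $\mathcal{A}^X$ is $\mathrm{v}^X$ and its Varchenko matrix is $V_{\mathcal{A}^X}$; the centralisation of $\mathcal{A}^X$ to $F$ is $\mathcal{A}_F^X$, whence its intrinsic weight is $\mathrm{b}_F^X$; and its intrinsic multiplicity $\#\{C\ |\ \overline{C} \cap P = F\}/2$, for $P \in \mathcal{A}_F^X$, is $\beta_F^X$ — the proof of Theorem~\ref{ThDet}, applied to $\mathcal{A}^X$, already establishes that this count is independent of the chosen $P$. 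Substituting these equalities into the displayed identity of the previous paragraph yields the claim.

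I expect the main obstacle to lie purely in this dictionary: verifying that the map $G \mapsto$ (its representative in $F_{\mathcal{A}}$) is well defined, is a bijection onto $F_{\mathcal{A}^X}$ carrying chambers of $\mathcal{A}^X$ in $X$ onto $C_{\mathcal{A}^X}$, and respects the order $\preceq$ and the face semigroup operation, together with bookkeeping the variable identification $q_{H\cap X}^{\varepsilon} = q_H^{\varepsilon}$ consistently when several $H \in \mathcal{A}$ restrict to the same dehyperplane of $\mathcal{A}^X$. Once this dictionary is in place, the conclusion is an immediate quotation of Corollary~\ref{CoTh}.
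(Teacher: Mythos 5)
Your proposal is correct and follows exactly the paper's route: the paper's entire proof is the one-line observation that this is Corollary~\ref{CoTh} applied to the dehyperplane arrangement $\mathcal{A}^X$ in $X$ (which is an arrangement by \cite[Lemma~2.2]{Ra3}). The dictionary you spell out — including the substitution bookkeeping when several $H \in \mathcal{A}$ restrict to the same element of $\mathcal{A}^X$ — is left implicit in the paper, so your version is simply a more careful rendering of the same argument.
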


\begin{proof}
It is Corollary~\ref{CoTh} but for the dehyperplane arrangement $\mathcal{A}^X$.
\end{proof}

\noindent Define the assembly of Varchenko matrices $S_{\mathcal{A}} := (s_{F,G})_{F,G \in F_{\mathcal{A}}}$ by
$$s_{F,G} := \begin{cases}
\mathbf{v}(F,G) & \text{if}\ GF = G,\\
0 & \text{otherwise}
\end{cases}.$$

\begin{proposition} \label{PrAs}
Let $\mathcal{A}$ be a dehyperplane arrangement in $T \in \mathscr{R}_n$. Then,
$$\det S_{\mathcal{A}} = \prod_{X \in L_{\mathcal{A}}} \, \prod_{F \in F_{\mathcal{A}^X} \setminus C_{\mathcal{A}^X}} (1 - \mathrm{b}_F^X)^{\beta_F^X}.$$
\end{proposition}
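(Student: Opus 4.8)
The plan is to recognize $S_{\mathcal{A}}$ as a block-triangular matrix after a suitable reordering of its index set $F_{\mathcal{A}}$, so that its determinant factors as a product of determinants of the diagonal blocks, each of which is a Varchenko matrix of a restriction $\mathcal{A}^X$. First I would partition $F_{\mathcal{A}}$ according to support: for each flat $X \in L_{\mathcal{A}}$, the faces $F$ with $\mathrm{s}(F) = X$ are exactly the chambers $C_{\mathcal{A}^X}$ of the restriction. Order the flats $X$ by any linear extension of the partial order $\leq$ on $L_{\mathcal{A}}$ that refines reverse inclusion (equivalently by increasing dimension), and order $F_{\mathcal{A}}$ so that all faces supported on $X$ come before all faces supported on $Y$ whenever $\dim X < \dim Y$. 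The key combinatorial observation is: if $s_{F,G} \neq 0$, i.e. $GF = G$, then $\mathrm{s}(G) \subseteq \mathrm{s}(F)$ — because $GF = G$ forces $\epsilon_H(G) \neq 0$ whenever $\epsilon_H(F) \neq 0$, so $\mathrm{s}(G) = \{H : \epsilon_H(G)=0\} \subseteq \{H : \epsilon_H(F) = 0\} = \mathrm{s}(F)$, hence $\mathrm{s}(F) \leq \mathrm{s}(G)$ in $L_{\mathcal{A}}$ and $\dim \mathrm{s}(F) \geq \dim \mathrm{s}(G)$ is false — let me restate: $\mathrm{s}(G) \subseteq \mathrm{s}(F)$ means $\dim \mathrm{s}(G) \le \dim \mathrm{s}(F)$. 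So with the chosen ordering, $s_{F,G} = 0$ unless $\dim \mathrm{s}(G) \le \dim \mathrm{s}(F)$, making $S_{\mathcal{A}}$ block upper-triangular with respect to the support filtration. The diagonal blocks are indexed by pairs $F,G$ with $\mathrm{s}(F) = \mathrm{s}(G) = X$; for such a pair $GF = G$ holds automatically (both have the same zero set, and off that zero set $F$ and $G$ determine faces of $\mathcal{A}_X$, and $GF=G$ reduces to a condition within $C_{\mathcal{A}_X}$ that is vacuous when both are chambers) — more carefully, when $\mathrm{s}(F) = \mathrm{s}(G) = X$, $FG = GF$ are the faces obtained by gluing, and one checks $GF = G$ iff $\epsilon_H(F) = \epsilon_H(G)$ for all $H \notin \mathcal{A}^X$, which is always true since those are exactly the $H$ with $H \cap X \in \mathcal{A}^X$ and both $F, G$ lie in $X$... this needs the identification $C_{\mathcal{A}^X} = \{F \in F_{\mathcal{A}} : \mathrm{s}(F) = X\}$ stated in the excerpt.

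Next I would identify the $X$-diagonal block with $V_{\mathcal{A}^X}$. For $F, G \in C_{\mathcal{A}^X}$ the entry is $s_{F,G} = \mathbf{v}(F,G) = \mathrm{v}^{\mathrm{s}(FG)}(FG, GF)$; since $\mathrm{s}(F) = \mathrm{s}(G) = X$ we have $\mathrm{s}(FG) = X$ (gluing two faces with zero set $X$ keeps zero set $X$), and $FG, GF$ are both chambers of $\mathcal{A}^X$. It remains to check that $\mathrm{v}^X(FG, GF)$ coincides with the Varchenko entry $\mathrm{v}(G, F)$ (or $\mathrm{v}(F,G)$, up to the transpose convention) defining $V_{\mathcal{A}^X}$. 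This is a direct unwinding of the definitions of $\mathscr{H}$ for restrictions: the hyperplanes separating $FG$ from $GF$ within $\mathcal{A}^X$ are precisely those $H$ with $H \cap X \in \mathcal{A}^X$ and $\epsilon_H(FG) = -\epsilon_H(GF)$, and on such $H$, since $H \not\supseteq X$, the signs $\epsilon_H(FG)$ and $\epsilon_H(GF)$ are the genuine face signs $\epsilon_H(F)$ and $\epsilon_H(G)$ respectively (because $\epsilon_H(F), \epsilon_H(G) \neq 0$ when $\mathrm{s}(F) = \mathrm{s}(G) = X$ and $H \notin \mathcal{A}_X$... wait, $H$ need not be in $\mathcal{A}^X$ unless $H \cap X \neq \emptyset$) — this matching is the content already implicit in Lemma~\ref{CFD}-style bookkeeping. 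Granting the identification, the $X$-block is $V_{\mathcal{A}^X}$ up to simultaneous row/column permutation, so its determinant is $\det V_{\mathcal{A}^X}$.

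Finally, by block-triangularity, $\det S_{\mathcal{A}} = \prod_{X \in L_{\mathcal{A}}} \det V_{\mathcal{A}^X}$, and Corollary~\ref{CoVX} rewrites each factor as $\prod_{F \in F_{\mathcal{A}^X} \setminus C_{\mathcal{A}^X}} (1 - \mathrm{b}_F^X)^{\beta_F^X}$, which gives the stated formula. The main obstacle I anticipate is not the triangularity — that follows cleanly from the support inequality $\mathrm{s}(G) \subseteq \mathrm{s}(F)$ whenever $GF = G$ — but rather the careful verification that the diagonal blocks are exactly the Varchenko matrices $V_{\mathcal{A}^X}$ with the correct variables $q_H^\varepsilon$, i.e. that $\mathbf{v}(F,G)$ restricted to $C_{\mathcal{A}^X} \times C_{\mathcal{A}^X}$ equals $\mathrm{v}(F,G)$ as used in $V_{\mathcal{A}^X}$. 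This requires tracking which hyperplanes $H$ contribute to $\mathscr{H}$ and with which sign, and confirming that the passage through the products $FG$ and $GF$ does not alter the separating set — a routine but fiddly sign-sequence computation that uses the definition $\epsilon_H(FG) = \epsilon_H(F)$ when $\epsilon_H(F) \ne 0$ together with the fact that faces in $C_{\mathcal{A}^X}$ have zero set exactly $X$.
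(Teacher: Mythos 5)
Your proposal is correct and follows essentially the same route as the paper: order the faces by their supports, observe that $GF=G$ forces $\mathrm{s}(G)\subseteq\mathrm{s}(F)$ so that $S_{\mathcal{A}}$ is block triangular with diagonal blocks $V_{\mathcal{A}^X}$, and then apply Corollary~\ref{CoVX}. The paper simply cites the argument of Aguiar--Mahajan for the triangularity and the identification of the diagonal blocks, whereas you spell out (with a brief self-corrected wobble about the direction of the containment) the verifications it leaves implicit.
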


\begin{proof}
We basically take up the argument in \cite[§~8.4.5]{AgMa} in a dehyperplane arrangement context. Write $S_{\mathcal{A}}$ as a block matrix indexed by flats, with the $(X,Y)$-block consisting of the entries $s_{F,G}$ such that $\mathrm{s}(F) = X$ and $\mathrm{s}(G) = Y$. Moreover, order the flats so that row $X$ appears above row $Y$ if $X < Y$. That block matrix is lower triangular with the diagonal block $(X,X)$-block being $V_{\mathcal{A}}^X$. Hence, $\displaystyle \det S_{\mathcal{A}} = \prod_{X \in L_{\mathcal{A}}} \det V_{\mathcal{A}}^X$, and it remains to apply Corollary~\ref{CoVX}.  
\end{proof}

\begin{lemma} \label{LeFG}
	Let $\mathcal{A}$ be a dehyperplane arrangement in $T \in \mathscr{R}_n$, and $F,G,L \in F_{\mathcal{A}}$ such that $FG \preceq L$. Then,
	$$\mathbf{v}(F,G) = \mathbf{v}(L,G) \quad \text{and} \quad \mathbf{v}(G,F) = \mathbf{v}(G,L).$$
\end{lemma}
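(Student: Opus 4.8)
The plan is to unwind the definitions of $\mathbf{v}$ in terms of $\mathrm{v}^X$ and compare the two sides directly at the level of the index sets $\mathscr{H}(\cdot,\cdot)$. Recall $\mathbf{v}(F,G) = \mathrm{v}^{\mathrm{s}(FG)}(FG, GF)$ and $\mathbf{v}(L,G) = \mathrm{v}^{\mathrm{s}(LG)}(LG, GL)$. The first observation I would make is that the hypothesis $FG \preceq L$ forces $\mathrm{s}(L) \subseteq \mathrm{s}(FG)$, hence $\mathrm{s}(LG) = \mathrm{s}(L) \cap \mathrm{s}(G) \subseteq \mathrm{s}(FG) \cap \mathrm{s}(G)$; combined with the fact that $\mathrm{s}(FG) = \mathrm{s}(F) \cap \mathrm{s}(G)$ (a face sign is $0$ under $FG$ exactly when it is $0$ under both $F$ and $G$), and that $FG \preceq L$ gives $\mathrm{s}(FG) \subseteq \mathrm{s}(F)$... here I would instead argue that $LG$ and $FG$ have the same support: since $FG \preceq L$ we get $FG \preceq LG$ (precompose both with $G$ — more carefully, $\epsilon_H(LG)$ agrees with $\epsilon_H(FG)$ whenever $\epsilon_H(FG) \ne 0$), and on the support both $LG$ and $FG$ are $0$, so in fact $\epsilon_{\mathcal{A}}(LG) = \epsilon_{\mathcal{A}}(FG)$, i.e. $LG = FG$. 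Symmetrically $GL = GF$. So the two sides are literally the same expression $\mathrm{v}^{\mathrm{s}(FG)}(FG, GF)$, and the identity $\mathbf{v}(F,G) = \mathbf{v}(L,G)$ is immediate, and likewise $\mathbf{v}(G,F) = \mathbf{v}(G,L)$.

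So the real content is the claim $LG = FG$ (and $GL = GF$) whenever $FG \preceq L$. I would prove this by checking signs hyperplane by hyperplane. Fix $H \in \mathcal{A}$. If $\epsilon_H(FG) \ne 0$, then by definition of $\preceq$ applied to $FG \preceq L$ we have $\epsilon_H(L) = \epsilon_H(FG) \ne 0$, hence $\epsilon_H(LG) = \epsilon_H(L) = \epsilon_H(FG)$. If $\epsilon_H(FG) = 0$, then $\epsilon_H(F) = \epsilon_H(G) = 0$; from $FG \preceq L$, $\epsilon_H(L) \in \{0, \epsilon_H(FG)\} = \{0\}$, so $\epsilon_H(L) = 0$, and therefore $\epsilon_H(LG) = \epsilon_H(G) = 0 = \epsilon_H(FG)$. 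Hence $\epsilon_{\mathcal{A}}(LG) = \epsilon_{\mathcal{A}}(FG)$, so $LG = FG$. For the second equation, I would run the same case analysis on $GF$ versus $GL$: when $\epsilon_H(G) \ne 0$ both equal $\epsilon_H(G)$; when $\epsilon_H(G) = 0$ then $\epsilon_H(GF) = \epsilon_H(F)$ and $\epsilon_H(GL) = \epsilon_H(L)$, and one must show $\epsilon_H(F) = \epsilon_H(L)$ in this case. If $\epsilon_H(F) \ne 0$ then $\epsilon_H(FG) = \epsilon_H(F) \ne 0$, so $\epsilon_H(L) = \epsilon_H(FG) = \epsilon_H(F)$; if $\epsilon_H(F) = 0$ then $\epsilon_H(FG) = 0$ (since $\epsilon_H(G) = 0$ too), so $\epsilon_H(L) = 0 = \epsilon_H(F)$. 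Thus $GL = GF$.

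With $LG = FG$ and $GL = GF$ established, $\mathrm{s}(LG) = \mathrm{s}(FG)$ automatically, and
$$\mathbf{v}(L,G) = \mathrm{v}^{\mathrm{s}(LG)}(LG, GL) = \mathrm{v}^{\mathrm{s}(FG)}(FG, GF) = \mathbf{v}(F,G),$$
$$\mathbf{v}(G,L) = \mathrm{v}^{\mathrm{s}(GL)}(GL, LG) = \mathrm{v}^{\mathrm{s}(GF)}(GF, FG) = \mathbf{v}(G,F).$$
I do not expect any serious obstacle here: the whole statement reduces to the elementary combinatorial fact that $FG \preceq L$ already pins down every nonzero coordinate of $L$ to match $FG$, and the zero coordinates of $FG$ are absorbed when we postcompose with $G$. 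The one place to be slightly careful is keeping the roles of $F$ and $G$ straight in the second identity, since there the variable coordinate after forming $GF$ comes from $F$, not from $G$, and one needs the hypothesis in the form $FG \preceq L$ (not $GF \preceq L$) to conclude $\epsilon_H(F) = \epsilon_H(L)$ on the support of $G$. This is exactly the case analysis written above.
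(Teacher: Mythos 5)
There is a genuine error at the core of your argument: the claim that $FG \preceq L$ forces $LG = FG$ (and $GL = GF$) is false, and the step where you derive it inverts the partial order. By the paper's definition, $FG \preceq L$ means $\epsilon_H(FG) \in \{0, \epsilon_H(L)\}$ for every $H$; when $\epsilon_H(FG) = 0$ this is vacuously satisfied and places \emph{no} constraint on $\epsilon_H(L)$. Your line ``from $FG \preceq L$, $\epsilon_H(L) \in \{0,\epsilon_H(FG)\}$'' is the condition for $L \preceq FG$, not for $FG \preceq L$, and the same inversion reappears in the subcase $\epsilon_H(F)=\epsilon_H(G)=0$ of your second case analysis. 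Concretely: take $G=F$ any face that is not a chamber and $L$ any chamber with $F \prec L$; then $FG = F \preceq L$, but $LG = LF = L \neq F = FG$ and $GL = FL = L \neq F = GF$. So the reduction ``the two sides are literally the same expression'' does not hold.

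The lemma is nevertheless true, and your hyperplane-by-hyperplane strategy can be repaired by comparing not the faces but the separating sets that $\mathbf{v}$ actually records. Unwinding the definitions, the hyperplanes contributing to $\mathbf{v}(F,G) = \mathrm{v}^{\mathrm{s}(FG)}(FG,GF)$ are those with $\epsilon_H(FG) = -\epsilon_H(GF)$, i.e.\ with $\epsilon_H(F) = -\epsilon_H(G) \neq 0$, each contributing $q_H^{\epsilon_H(F)}$; similarly for $\mathbf{v}(L,G)$ with $L$ in place of $F$. Now run the cases: if $\epsilon_H(F)\neq 0$ and $\epsilon_H(G)\neq 0$, then $\epsilon_H(FG)=\epsilon_H(F)\neq 0$, so $FG \preceq L$ gives $\epsilon_H(L)=\epsilon_H(F)$ and $H$ contributes identically to both sides; if $\epsilon_H(G)=0$, then $H$ contributes to neither side (this is precisely where your claimed equalities $LG=FG$, $GL=GF$ fail, but harmlessly so); if $\epsilon_H(F)=0$ and $\epsilon_H(G)\neq 0$, then $\epsilon_H(L)=\epsilon_H(FG)=\epsilon_H(G)$, so again $H$ contributes to neither side. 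The second identity is handled the same way, the exponent now being $\epsilon_H(G)$ on both sides. For comparison, the paper argues instead through the chain $\mathbf{v}(F,G)=\mathbf{v}(FG,GF)=\mathbf{v}(L,GFL)=\mathbf{v}(L,GL)=\mathbf{v}(L,G)$, using only that $F$ and $L$ have the same signs on the hyperplanes where $F$ is nonzero and $G$ is zero; your intended approach is more elementary but needs the correction above to go through.
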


\begin{proof}
	For $F,G \in F_{\mathcal{A}}$, define the set $\mathrm{d}(F,G) := \{H \in \mathcal{A}\ |\ \epsilon_H(F) \neq 0,\, \epsilon_H(G) = 0\}$. We have $\mathbf{v}(F,G) = \mathbf{v}(FG,GF) = \mathbf{v}(L,GFL)$. Moreover, $FG \preceq L$ also implies $\epsilon_{\mathrm{d}(F,G)}(F) = \epsilon_{\mathrm{d}(F,G)}(L)$. Then, $\mathbf{v}(L,GFL) = \mathbf{v}(L,GL) = \mathbf{v}(L,G)$.
	
	\noindent Similarly, $\mathbf{v}(G,F) = \mathbf{v}(GF,FG) = \mathbf{v}(GFL,L) = \mathbf{v}(GL,L) = \mathbf{v}(G,L)$.
\end{proof}

\begin{lemma} \label{LeFLLF}
	Let $\mathcal{A}$ be a dehyperplane arrangement in $T \in \mathscr{R}_n$, and $F,G,L \in F_{\mathcal{A}}$ such that $G \preceq L$. Then,
	$$\mathbf{v}(F,L) = \mathbf{v}(F,GF) \, \mathbf{v}(GF,L).$$
\end{lemma}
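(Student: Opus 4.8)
The plan is to peel off two easy normalizations and then reduce everything to one factorization of a Varchenko weight inside a single fixed restricted arrangement, so that no comparison between distance functions over different flats is needed. Throughout I use the hypothesis $G \preceq L$ only through its two sign-sequence consequences: $\epsilon_H(L) = 0 \Rightarrow \epsilon_H(G) = 0$, and $\epsilon_H(G) \neq 0 \Rightarrow \epsilon_H(G) = \epsilon_H(L)$.

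First I would record the easy identities. A direct sign-sequence computation gives $F(GF) = FG$ and $(GF)F = GF$, so by the definition of $\mathbf{v}$ one gets $\mathbf{v}(F, GF) = \mathrm{v}^{\mathrm{s}(FG)}(FG, GF) = \mathbf{v}(F, G)$; thus it suffices to prove $\mathbf{v}(F, L) = \mathbf{v}(F, G)\,\mathbf{v}(GF, L)$. Next, using $G \preceq L$ one checks $FG \preceq FL$: on hyperplanes with $\epsilon_H(F) \neq 0$ both sign sequences equal $\epsilon_H(F)$, and on those with $\epsilon_H(F) = 0$ the condition $\epsilon_H(FG) = \epsilon_H(G) \in \{0, \epsilon_H(L)\} = \{0, \epsilon_H(FL)\}$ is exactly $G \preceq L$. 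Lemma~\ref{LeFG} then yields $\mathbf{v}(F, G) = \mathbf{v}(FL, G)$. Put $N := G(FL)$. Since $(FL)G = FL$ (again by $G \preceq L$), this reads $\mathbf{v}(FL, G) = \mathrm{v}^{\mathrm{s}(FL)}(FL, N)$. A further short computation shows $N = (GF)L$, $L(GF) = LF$, and $\mathrm{s}(N) = \mathrm{s}(F) \cap \mathrm{s}(L) = \mathrm{s}(FL) = \mathrm{s}(LF)$, so that $FL$, $N$, $LF$ are all chambers of $\mathcal{A}^{\mathrm{s}(FL)}$, $\mathbf{v}(GF, L) = \mathrm{v}^{\mathrm{s}(FL)}(N, LF)$, and of course $\mathbf{v}(F, L) = \mathrm{v}^{\mathrm{s}(FL)}(FL, LF)$. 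Hence the whole statement reduces to
\[
\mathrm{v}^{\mathrm{s}(FL)}(FL, LF) = \mathrm{v}^{\mathrm{s}(FL)}(FL, N)\cdot \mathrm{v}^{\mathrm{s}(FL)}(N, LF).
\]

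For this I would compare the index sets $\mathscr{H}$ of the three distances, all taken over the fixed flat $X := \mathrm{s}(FL)$, so that the side condition $H \cap X \in \mathcal{A}^X$ is literally identical in all three and may be ignored in the comparison. The crux is that $N$ lies in the combinatorial interval between $FL$ and $LF$, i.e. $\epsilon_H(N) \in \{\epsilon_H(FL), \epsilon_H(LF)\}$ for every $H$: if $\epsilon_H(G) = 0$ then $\epsilon_H(N) = \epsilon_H(FL)$ by definition of $N$, while if $\epsilon_H(G) \neq 0$ then $\epsilon_H(N) = \epsilon_H(G) = \epsilon_H(L) = \epsilon_H(LF)$ (the last step because $\epsilon_H(L) \neq 0$). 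Given this, an admissible hyperplane with $\epsilon_H(FL) = -\epsilon_H(LF)$ belongs to exactly one of $\mathscr{H}(FL, N)$, $\mathscr{H}(N, LF)$, and in either case contributes the same monomial $q_H^{\epsilon_H(FL)}$ that it contributes to $\mathscr{H}(FL, LF)$; an admissible hyperplane with $\epsilon_H(FL) = \epsilon_H(LF)$ forces $\epsilon_H(N) = \epsilon_H(FL) = \epsilon_H(LF)$ and so lies in none of the three sets. Therefore $\mathscr{H}(FL, LF) = \mathscr{H}(FL, N) \sqcup \mathscr{H}(N, LF)$ with matching weights, which is the displayed identity; unwinding the reductions gives $\mathbf{v}(F, L) = \mathbf{v}(F, G)\,\mathbf{v}(GF, L) = \mathbf{v}(F, GF)\,\mathbf{v}(GF, L)$.

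The work is almost entirely sign-sequence bookkeeping, and the main thing to get right is that each of $F(GF) = FG$, $FG \preceq FL$, $(FL)G = FL$, $N = (GF)L$, $L(GF) = LF$, $\mathrm{s}(N) = \mathrm{s}(FL)$, $\epsilon_H(N) \in \{\epsilon_H(FL), \epsilon_H(LF)\}$ invokes $G \preceq L$ in precisely the right spot. The one genuinely structural ingredient is the displayed factorization through the interior chamber $N$; it is a close analogue of Lemma~\ref{CFD} but does not follow from it verbatim, since $N$ is in general not of the form $P\cdot(LF)$ for a face $P \preceq FL$, and so must be proved by the $\mathscr{H}$-set comparison above. (Alternatively one can bypass Lemma~\ref{LeFG} entirely and expand $\mathbf{v}(F, L)$, $\mathbf{v}(F, GF)$, $\mathbf{v}(GF, L)$ directly as $\mathscr{H}$-products; then the extra burden is reconciling the two distinct flats $\mathrm{s}(FL) \leq \mathrm{s}(FG)$, which comes down to the geometric fact that any hyperplane separating $F$ from $G$ meets the flat $\mathrm{s}(FG)$ because that flat contains $G$.)
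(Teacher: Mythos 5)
Your proof is correct, and every one of the sign-sequence identities you rely on ($F(GF)=FG$, $(GF)F=GF$, $FG\preceq FL$, $(FL)G=FL$, $G(FL)=(GF)L$, $L(GF)=LF$, $\mathrm{s}(GFL)=\mathrm{s}(FL)$, and the betweenness $\epsilon_H(GFL)\in\{\epsilon_H(FL),\epsilon_H(LF)\}$) checks out. The combinatorial heart is the same as in the paper: both arguments partition the separating set $\mathscr{H}(FL,LF)$ according to whether $\epsilon_H(G)=0$ or not, the first block giving $\mathbf{v}(F,GF)$ and the second $\mathbf{v}(GF,L)$. But your packaging is genuinely different. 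The paper computes $\mathscr{H}(FG,GF)$ and $\mathscr{H}(GFL,LF)$ directly as explicit sets indexed by $\mathrm{e}(F,G)$ and $\mathrm{e}(F,L)\setminus\mathrm{e}(F,G)$ and asserts their disjoint union is $\mathscr{H}(FL,LF)$ — three $\mathscr{H}$-sets taken over two different flats, $\mathrm{s}(FG)$ and $\mathrm{s}(FL)$, with the admissibility conditions $H\cap X\in\mathcal{A}^X$ left tacit. You instead first invoke Lemma~\ref{LeFG} (which the paper's proof does not use) to transport $\mathbf{v}(F,GF)=\mathbf{v}(F,G)$ to $\mathbf{v}(FL,G)=\mathrm{v}^{\mathrm{s}(FL)}(FL,GFL)$, so that all three distances live in the single restricted arrangement $\mathcal{A}^{\mathrm{s}(FL)}$, and then prove a clean gate-type factorization through the intermediate chamber $N=GFL$. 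What this buys is exactly the point your parenthetical identifies: the flat-reconciliation issue disappears, since the side condition on admissible hyperplanes is literally the same in all three $\mathscr{H}$-sets being compared; the cost is the extra dependence on Lemma~\ref{LeFG}. Your observation that the factorization through $N$ is not an instance of Lemma~\ref{CFD} (because $N$ need not be $P\cdot(LF)$ for a face $P\preceq FL$ of $\mathcal{A}$) is also accurate, so the direct $\mathscr{H}$-set comparison you give is indeed needed.
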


\begin{proof}
	For $F,G \in F_{\mathcal{A}}$, define the set $\mathrm{e}(F,G) := \{H \in \mathcal{A}\ |\ \epsilon_H(F) \neq 0,\, \epsilon_H(G) \neq 0\}$.
	
	\noindent On one side, $\mathbf{v}(F,GF) = \mathbf{v}(FG,GF)$ and $$\mathscr{H}(FG,GF) = \big\{H^{\epsilon_H(F)}\ \big| \ H \in \mathrm{e}(F,G),\, \epsilon_H(F) \neq \epsilon_H(G)\big\}.$$
	On the other side, $\mathbf{v}(GF,L) = \mathbf{v}(GFL,LF)$ and $$\mathscr{H}(GFL,LF) = \big\{H^{\epsilon_H(F)}\ \big| \ H \in \mathrm{e}(F,L) \setminus \mathrm{e}(F,G),\, \epsilon_H(F) \neq \epsilon_H(L)\big\}.$$
	Hence, $\mathscr{H}(FG,GF) \sqcup \mathscr{H}(GFL,LF) = \mathscr{H}(FL,LF)$.
\end{proof}

\begin{lemma} \label{LeSy}
	Let $\mathcal{A}$ be a dehyperplane arrangement in $T \in \mathscr{R}_n$. The solution space of the Aguiar-Mahajan system of $\mathcal{A}$ coincides with that of the linear equation system
	$$\sum_{\substack{F \in F_{\mathcal{A}} \\ LF = G}} x_F\,\mathbf{v}(F,G) = 0 \quad \text{indexed by} \quad L,G \in F_{\mathcal{A}} \setminus \min F_{\mathcal{A}} \quad \text{with} \quad L \preceq G.$$ 
\end{lemma}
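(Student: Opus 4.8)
The claim is that the "sparse" Aguiar–Mahajan system (indexed only by $G \in F_{\mathcal{A}} \setminus \min F_{\mathcal{A}}$, with sum over $F$ such that $GF = G$) and the "fat" system (indexed by pairs $L \preceq G$ with both outside $\min F_{\mathcal{A}}$, with sum over $F$ such that $LF = G$) have the same solution space. I would prove this by showing each equation of one system is a consequence of the equations of the other.

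First I would show every sparse equation is a fat equation: taking $L = G$ in the fat system, the condition $GF = G$ is exactly the index condition of the sparse equation, and $\mathbf{v}(F,G)$ is the same coefficient, so the sparse system is literally a subsystem of the fat one. Hence any solution of the fat system solves the sparse one. The real content is the converse: assuming the sparse equations hold, I must derive the equation indexed by an arbitrary pair $L \preceq G$. The key is Lemma~\ref{LeFLLF}: for any $F$ with $LF = G$, since $L \preceq G$ we can write $\mathbf{v}(F,G) = \mathbf{v}(F, LF)\,\mathbf{v}(LF, G)$. But $LF = G$, so $\mathbf{v}(LF,G) = \mathbf{v}(G,G) = 1$, giving $\mathbf{v}(F,G) = \mathbf{v}(F,LF)$. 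Wait — I need to be careful: Lemma~\ref{LeFLLF} reads $\mathbf{v}(F,L) = \mathbf{v}(F,GF)\,\mathbf{v}(GF,L)$ with $G \preceq L$; applied with the roles suitably renamed (i.e. with $L$ here playing the role of "$G$" and $G$ here playing the role of "$L$", using $L \preceq G$), it factors $\mathbf{v}(F,G) = \mathbf{v}(F,LF)\,\mathbf{v}(LF,G)$, and the second factor is $1$ because $LF = G$. So the fat equation indexed by $(L,G)$ rewrites as $\sum_{LF = G} x_F\,\mathbf{v}(F,LF) = 0$.

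Next I would reorganize the sum by the value of $LF$. For each $F$, set $E := LF$; then $E \succeq L$ (indeed $LE = L(LF) = LF = E$), and among all faces with $LF = G$ the value $E$ ranges over... actually $E = G$ is forced, so that reparametrization is vacuous. Instead, the productive move is the other direction: group the terms of the sparse equation indexed by $G$ according to the value of $LF$ for the fixed $L$. Concretely, the sparse equation for index $G$ is $\sum_{GF = G} x_F\,\mathbf{v}(F,G) = 0$. I would take a suitable $\mathbb{Z}$-linear combination of sparse equations over a range of indices $G'$ with $L \preceq G' \preceq G$ and show, using Lemma~\ref{LeFG} (which gives $\mathbf{v}(F,G') = \mathbf{v}(L,G')$ type collapses when $FG' \preceq$ something, hence lets one replace coefficients consistently) together with Corollary~\ref{CoWi} (the Witt identity $\sum_{F \in F_{\mathcal{A}}^{(A,G)}}(-1)^{\mathrm{rk}\,F}\sum_{FL = G} x_L = \dots$, specialized to exhibit that an alternating sum over an interval of faces telescopes), that this combination equals the fat equation for $(L,G)$. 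This is the mechanism Aguiar–Mahajan use in the hyperplane case, and the present paper has set up exactly the analogues (Corollary~\ref{CoWi}, Lemmas~\ref{LeFG} and \ref{LeFLLF}) needed to transport it.

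The main obstacle is the bookkeeping of the Möbius-type inversion on the face interval $[L,G]$: one must check that the matrix relating "$LF = G'$-sums" to "$GF = G$-sums" as $G'$ ranges over $[L,G]$ is unitriangular with the correct $(-1)^{\mathrm{rk}}$ signs, so that it is invertible over $\mathbb{Z}$ and the two linear spans of equations coincide. Concretely I expect to invoke Corollary~\ref{CoWi} with $A$ ranging over $\min F_{\mathcal{A}}$ below $L$ and to peel off the summand $F = L$ to isolate the fat equation, the remaining summands being $\mathbb{Z}$-combinations of sparse equations already known to vanish; the delicate point is ensuring that every face $F$ appearing there with $LF \ne L$ really does satisfy $G'F = G'$ for the relevant $G' = LF$ so that the corresponding sparse equation is available, and that Lemma~\ref{LeFLLF} correctly factors each coefficient $\mathbf{v}(F,G)$ through $\mathbf{v}(F,LF)\,\mathbf{v}(LF,G)$ so the recombination is coefficient-exact, not merely up to a unit.
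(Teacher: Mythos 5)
Your overall strategy---observe that the sparse system is the $L=G$ subsystem of the fat one, then derive each fat equation from the sparse ones by descending through the interval below $G$ with the Witt identities---is the right one, and you correctly name the tools (Corollary~\ref{CoWi}, Lemmas~\ref{LeFG} and~\ref{LeFLLF}). But the mechanism you describe has a genuine gap: the fat equations are \emph{not} $\mathbb{Z}$-linear combinations of the sparse ones, so the unitriangular M\"obius-inversion argument you sketch cannot close. What Equation~\ref{EqF2} actually gives, for fixed $G$ and with the lower index $A$ descending (note that the varying index in the alternating sum is the \emph{lower} face of the pair, not an upper face $G'$ as in your sketch), is
$$(-1)^{\mathrm{rk}\,A}\sum_{\substack{F \in F_{\mathcal{A}} \\ AF=G}}x_F\,\mathbf{v}(F,G)\;=\;(-1)^{\mathrm{rk}\,G}\sum_{\substack{F \in F_{\mathcal{A}} \\ AF\preceq\tilde{G}_A}}x_F\,\mathbf{v}(F,G)$$
once the induction hypothesis kills the terms with $A\prec L\preceq G$. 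The right-hand side involves the opposite face $\tilde{G}_A$ and is not known to vanish; it must be converted, using $\mathrm{s}(AF)=\mathrm{s}(G)=\mathrm{s}(\tilde{G}_A)$ together with Lemmas~\ref{LeFLLF} and~\ref{LeFG}, into $(-1)^{\mathrm{rk}\,G}\,\mathbf{v}(\tilde{G}_A,G)\sum_{AF=\tilde{G}_A}x_F\,\mathbf{v}(F,\tilde{G}_A)$, i.e.\ a multiple of the fat equation for the pair $(A,\tilde{G}_A)$. Only by writing the symmetric identity with $G$ and $\tilde{G}_A$ interchanged and combining the two does one obtain
$$\bigl(1-\mathbf{v}(G,\tilde{G}_A)\,\mathbf{v}(\tilde{G}_A,G)\bigr)\sum_{\substack{F \in F_{\mathcal{A}} \\ AF=G}}x_F\,\mathbf{v}(F,G)=0,$$
and one concludes because $1-\mathbf{v}(G,\tilde{G}_A)\,\mathbf{v}(\tilde{G}_A,G)$ is a non-zero-divisor in $R_{\mathcal{A}}$. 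So the derivation is exact only after cancelling this factor---contrary to your closing claim that the recombination is ``coefficient-exact, not merely up to a unit.'' Your proposal never confronts the $\tilde{G}_A$ term at all, and your first use of Lemma~\ref{LeFLLF} (namely $\mathbf{v}(F,G)=\mathbf{v}(F,LF)\cdot 1$ when $LF=G$) is vacuous; the missing idea is precisely the opposite-face pairing.
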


\begin{proof}
	Note that the Aguiar-Mahajan system is smaller than that of Lemma~\ref{LeSy}. So we need to show that any solution of the former also solves the latter. The proof is inspired from the backward induction of the proof of \cite[Lemma~8.18]{AgMa}. It is clear that the solutions coincide if $L=G$. Let $A \in F_{\mathcal{A}} \setminus \min F_{\mathcal{A}}$ with $A \preceq G$, and start with Equation~\ref{EqF2} by replacing $x_F$ with $x_F \, \mathbf{v}(F,G)$. By induction, $\displaystyle \sum_{L \in F_{\mathcal{A}}^{(A,G)}} (-1)^{\mathrm{rk}\,L} \sum_{\substack{F \in F_{\mathcal{A}} \\ LF = G}} x_F \, \mathbf{v}(F,G) = (-1)^{\mathrm{rk}\,A} \sum_{\substack{F \in F_{\mathcal{A}} \\ AF = G}} x_F \, \mathbf{v}(F,G)$, thus
	\begin{align*}
	(-1)^{\mathrm{rk}\,A} \sum_{\substack{F \in F_{\mathcal{A}} \\ AF = G}} x_F \, \mathbf{v}(F,G)
	& = (-1)^{\mathrm{rk}\,G} \sum_{\substack{F \in F_{\mathcal{A}} \\ AF \preceq \tilde{G}_A}} x_F \, \mathbf{v}(F,G) \\
	& = (-1)^{\mathrm{rk}\,G} \sum_{\substack{F \in F_{\mathcal{A}} \\ AF = \tilde{G}_A}} x_F \, \mathbf{v}(F,G) \quad \text{as $\mathrm{s}(AF) = \mathrm{s}(G) = \mathrm{s}(\tilde{G}_A)$} \\
	& = (-1)^{\mathrm{rk}\,G} \sum_{\substack{F \in F_{\mathcal{A}} \\ AF = \tilde{G}_A}} x_F \, \mathbf{v}(F,AF) \, \mathbf{v}(AF,G) \quad \text{using Lemma~\ref{LeFLLF}} \\
	& = (-1)^{\mathrm{rk}\,G} \mathbf{v}(\tilde{G}_A,G) \sum_{\substack{F \in F_{\mathcal{A}} \\ AF = \tilde{G}_A}} x_F \, \mathbf{v}(F,\tilde{G}_A)  \quad \text{using Lemma~\ref{LeFG}}.
	\end{align*}
	Interchanging the roles of $G$ and $\tilde{G}_A$ yields a similar identity. Combining both ones, we obtain $$\big(1- \mathbf{v}(G,\tilde{G}_A) \, \mathbf{v}(\tilde{G}_A,G)\big) \sum_{\substack{F \in F_{\mathcal{A}} \\ AF = G}} x_F \, \mathbf{v}(F,G) = 0.$$
\end{proof}

\noindent We can finally proceed to the proof of Theorem~\ref{ThSy}.

\begin{proof}
	To the Aguiar-Mahajan system add the equations
	$$x_F = \alpha_F \quad \text{for} \quad F \in \min F_{\mathcal{A}},\, \alpha_F \in R_{\mathcal{A}},$$
	where $\alpha_F$ is fixed but arbitrary. The matrix of that linear system is the assembly $S_{\mathcal{A}}$. We know form Proposition~\ref{PrAs} that $\det S_{\mathcal{A}}$ is invertible in $B_{\mathcal{A}}$, so that system has a unique solution. Hence, the solution space dimension of the Aguiar-Mahajan system of $\mathcal{A}$ is $\# \min F_{\mathcal{A}}$.
	
	\noindent Now assume $\mathcal{A}$ is central. Lemma~\ref{LeSy} allows to consider the linear system $\displaystyle \sum_{\substack{F \in F_{\mathcal{A}} \\ LF = G}} x_F\,\mathbf{v}(F,G) = 0$ to solve the Aguiar-Mahajan system. We successively have
	\begin{align*}
	\sum_{\substack{F \in F_{\mathcal{A}} \\ LF = G}} x_F\,\mathbf{v}(F,G) & = 0 \\
	\sum_{\substack{F \in F_{\mathcal{A}} \\ LF = N \\ N \preceq G}} x_F\,\mathbf{v}(F,N) & = 0 \\
	\sum_{\substack{F \in F_{\mathcal{A}} \\ LF \preceq G}} x_F\,\mathbf{v}(F,G) & = 0 \quad \text{using Lemma~\ref{LeFG}}.
	\end{align*}
	Applying Equation~\ref{EqF1} with $x_F$ replaced by $x_F \, \mathrm{v}(F,G)$ and $A$ by $O$, we obtain
	$$\sum_{\substack{F \in F_{\mathcal{A}} \\ F \prec G}} x_F + x_G = (-1)^{\mathrm{rk}\,G} x_{\tilde{G}}\,\mathbf{v}(\tilde{G},G).$$
	Interchanging the roles of $G$ and $\tilde{G}$ yields $\displaystyle \sum_{\substack{F \in F_{\mathcal{A}} \\ F \prec \tilde{G}}} x_F + x_{\tilde{G}} = (-1)^{\mathrm{rk}\,G} x_{G}\,\mathbf{v}(G,\tilde{G})$. Thus,
	$$\displaystyle x_G = -\sum_{\substack{F \in F_{\mathcal{A}} \\ F \prec G}} x_F + x_G \, \mathbf{v}(G,\tilde{G}) \, \mathbf{v}(\tilde{G},G) - (-1)^{\mathrm{rk}\,G} \, \mathbf{v}(\tilde{G},G) \sum_{\substack{F \in F_{\mathcal{A}} \\ F \prec \tilde{G}}} x_F.$$
\end{proof}

\bibliographystyle{abbrvnat}

\end{document}